\newcounter{zhipengcount}
\newcounter{jinhocount}
\newtheorem{thm}{Theorem}[section]
\newtheorem{lm}{Lemma}[section]
\newtheorem{prop}{Proposition}[section]
\newtheorem{rmk}{Remark}[section]
\newtheorem{cor}{Corollary}[section]
\newcommand{\widthf}{W}
\newcommand{\width}{M}
\newcommand{\Prob}{\mathbb{P}}
\newcommand{\Z}{\mathbb{Z}}
\newcommand{\R}{\mathbb{R}}
\newcommand{\C}{\mathbb{C}}
\newcommand{\ii}{i}
\newcommand{\s}{s}
\newcommand{\x}{x}
\newcommand{\iin}{\mathrm{in}}
\newcommand{\oou}{\mathrm{out}}
\newcommand{\nodes}{\mathcal{D}}
\newcommand{\idn}{\mathbb{I}}
\newcommand{\Ne}{\mathcal{N}}
\newcommand{\Ha}{\mathbf{H}}
\newcommand{\HaC}{\mathcal{H}}
\newcommand{\To}{\mathbf{T}}
\newcommand{\ToC}{\mathcal{T}}
\newcommand{\al}{d}
\newcommand{\calL}{\mathcal{L}}
\newcommand{\N}{\mathbb{N}}
\newcommand{\lift}{\mathbb{L}}
\author{Jinho Baik\footnote{Department of Mathematics, University of Michigan,
Ann Arbor, MI, 48109, USA \newline email: \texttt{baik@umich.edu}}
and Zhipeng Liu\footnote{Department of Mathematics, University of
Michigan, Ann Arbor, MI, 48109, USA
\newline email: \texttt{zhpliu@umich.edu}}}
\date{\today}
\begin{document}
\title{Discrete
Toeplitz/Hankel determinants
and the
width of non-intersecting processes
}
\maketitle

\begin{abstract}
We show that the ratio of a discrete Toeplitz/Hankel determinant and its continuous counterpart equals a Freholm determinant involving continuous orthogonal polynomials.
This identity is used to evaluate a triple asymptotic of some discrete Toeplitz/Hankel determinants which arise in studying non-intersecting processes.
We show that the asymptotic fluctuations of the width of such processes are given by the GUE Tracy-Widom distribution.
This result leads us to an identity between the GUE Tracy-Widom distribution and the maximum of the sum of two independent Airy processes minus a parabola. We provide an independent proof of this identity.
\end{abstract}

\section{Introduction}\label{sec:int}

This paper consists of two parts. 
First, we develop a general method for an asymptotic analysis of the Toeplitz or Hankel determinants of \emph{discrete} measure using orthogonal polynomials with respect to a \emph{continuous} measure.
In the second part, which is longer, we evaluate the limiting distribution of the ``width'' of non-intersecting processes as an application.
This leads to the discovery of an interesting identity between the GUE Tracy-Widom distribution and the maximum of the sum of two independent Airy processes minus a parabola: see Theroem~\ref{z11-3-4}.

\subsection{Discrete Toeplitz determinants}

For a finite subset $\nodes$
 of the unit circle $\Sigma$ in the complex plane and a function $f:\nodes\to \R$,
 the Toeplitz determinant of the discrete measure $\frac1{|\nodes|} \sum_{z\in \nodes} f(z) \delta_z$ is defined as
\begin{equation}
	\To_n(f,\nodes)= \det \bigg[ \frac1{|\nodes|} \sum_{z\in \nodes} z^{-j+k} f(z) \bigg]_{j,k=0}^{n-1}.
\end{equation}
Since the Cauchy-Viennet/Andreief's formula implies that
\begin{equation}
	\To_n(f,\nodes)= \frac1{n!|\nodes|^n} \sum_{(z_1, \cdots, z_n)\in \nodes^n} \prod_{1\le j<k\le n} |z_j-z_k|^2 \prod_{j=1}^n f(z_j),
\end{equation}
this can also be thought of as the partition function of the discrete Coulomb gases with potential $f$ where the charges are confined to be on the discrete set $\nodes$.
Note that $\To_n(f,\nodes)=0$ unless $n\le |\nodes|$.

The Toepltiz and Hankel determinants of discrete measures also arise in many other problems. A few examples are
\begin{enumerate}
\item joint distribution of maximal crossing and maximal nesting of random matchings \cite{CDDSY, BaikJenkins}
\item maximal height of non-intersecting excursions on the half-line \cite{KobayashiIzumiKatori08, SchehrMajumdaretal08, Forrester10, Liechty11}
\item periodic totally asymmetric simple exclusion process \cite{LiuUpcoming}, and
\item width of non-intersecting
processes: see section~\ref{sec:withNiBm} below.
\end{enumerate}

For a continuous function $f$ on the unit circle, the usual Toeplitz determinant of the continuous symbol $f$ is defined as
\begin{equation}
	\ToC_n(f)= \det \bigg[ \int_{|z|=1} z^{-j+k} f(z) \frac{dz}{2\pi iz} \bigg]_{j,k=0}^{n-1}.
\end{equation}
For convenience, we call this Toeplitz determinant  \emph{continuous Toeplitz determinant}, and the Toeplitz determinant with a discrete measure  \emph{discrete Toeplitz determinant}. They are denoted by $\ToC_n$ and $\To_n$ respectively.

A discrete Toeplitz determinant contains parameters
\begin{enumerate}[(i)]
\item $n$, the size of the matrix,
\item 
$m=|\nodes|$, the size of the discrete set, and
\item $t$, a parameter of the function $f$.
\end{enumerate}
It is often of interest to study the asymptotics of $\To_n(f,\nodes)$ as all or some of the parameters become large.
From the Coulomb gas interpretation, we see that the discrete set imposes the minimal distance between the Coulumb charges.
This leads one to the doubly constrained equilibrium measure problem of finding a probability measure $\mu$ such that $0\le \mu \le \lim_{|\nodes|\to \infty} \frac1{|\nodes|} \sum_{z\in \nodes} \delta_z$.
Note that the upper constraint is absent for the related equilibrium measure problem for continuous Toeplitz determinants.
One way to evaluate the asymptotics of discrete Toeplitz determinants rigorously is to use the discrete orthogonal polynomials. A Deift-Zhou steepest-descent method \cite{DeiftZhou, DKMVZ3, DeiftBook} for the Riemann-Hilbert problems of general discrete orthogonal polynomials was previously developed in \cite{BKMMbook}.

The observation of this paper is that it is possible to study the asymptotics using \underline{continuous} orthogonal polynomials instead. This follows from a simple identity.
To state this identity, let $\Sigma$ be the positively-oriented unit circle and we assume the followings:
\begin{itemize}
\item[(a)] Let $\nodes$ be a finite discrete subset of $\Sigma$ and let $\Omega$ be a neighborhood of $\Sigma$.
\item[(b)] Let $f(z)$ be a non-trivial analytic function on $\Omega$ such that $f(z)\ge0$ for all $z\in\Sigma$.
\end{itemize}
Let $p_k(z)=\kappa_kz^k+\cdots$ be the orthonormal  polynomials with the continuous measure $f(z)\frac{dz}{2\pi iz}$ on the unit circle.
The `reversed polynomials' are defined by $p_k^*(z):= z^k \overline{p_k(\bar{z}^{-1})}$.
Let $\gamma(z)$ be an analytic function on $\Omega$ such that $\gamma$ vanishes exactly on $\nodes$ and all the zeros are simple. There are such functions from complex analysis.

\begin{thm}
\label{thm:0}
Assuming (a), (b) above, we have
\begin{equation}
  \label{z3-17-1}
  	\To_n(f, \nodes) =\ToC_n(f)\det\left(1+K\right)_{L^2(\Sigma_{in} \cup \Sigma_{out},\frac{dz}{2\pi iz})},
\end{equation}
where $K$ is the integral operator with kernel
\begin{equation}\label{eq:KforToe}
    K(z,w) = K_{conti}(z,w)
	\sqrt{v(z)v(w)f(z)f(w)}.
\end{equation}
with
\begin{equation}\label{eq:Kconti}
    K_{conti}(z,w):= z^{-n} \frac{p_n(z)p^*_{n}(w)-p^*_{n}(z)p_{n}(w)}{1-z^{-1}w}.
\end{equation}
Here the contours $\Sigma_{in}$ and $\Sigma_{out}$ are positively-oriented circles of radii $1-\epsilon$ and $1+\epsilon$, respectively, for a small $\epsilon>0$,
and
\begin{equation}\label{eq:vformu}
  	v(z):=\begin{cases}
    -\frac{z\gamma'(z)}{|\nodes|\gamma(z)},\quad &z\in \Sigma_{in},\\
    \frac{z\gamma'(z)}{|\nodes| \gamma(z)}-1 ,\quad &z\in \Sigma_{out}.
  \end{cases}
\end{equation}
\end{thm}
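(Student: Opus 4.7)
The plan is to exploit the rank-$n$ structure of the operator $K$ via the Christoffel--Darboux identity for Szeg\H{o} polynomials, reduce the Fredholm determinant to an $n\times n$ determinant, and evaluate its entries by converting contour integrals weighted by $v$ into discrete sums over $\nodes$. The first step is the algebraic identity
\[
  K_{conti}(z,w) \;=\; \sum_{k=0}^{n-1} z^{-k}\,p_k^*(z)\,p_k(w),
\]
which on $|z|=|w|=1$ reduces to the standard Szeg\H{o} Christoffel--Darboux formula via $\overline{p_k(z)} = z^{-k}p_k^*(z)$; since both sides are rational in $(z,w)$, the identity extends globally by analytic continuation. Writing $K(z,w) = \sum_{k=0}^{n-1}\phi_k(z)\psi_k(w)$ with $\phi_k(z)=z^{-k}p_k^*(z)\sqrt{v(z)f(z)}$ and $\psi_k(w)=p_k(w)\sqrt{v(w)f(w)}$ (any branch choice is immaterial since $\det(1+K)$ is invariant under conjugation of $K$ by a diagonal multiplier) displays $K$ as a rank-$n$ kernel, and by the standard finite-rank reduction of a Fredholm determinant,
\[
  \det(1+K) \;=\; \det\!\Bigl[\,\delta_{jk} + \int_{\Sigma_{in}\cup\Sigma_{out}} p_j(z)\,z^{-k}p_k^*(z)\,f(z)\,v(z)\,\frac{dz}{2\pi i z}\,\Bigr]_{j,k=0}^{n-1}.
\]

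The analytic heart of the argument is the residue identity
\[
  \int_{\Sigma_{in}\cup\Sigma_{out}} g(z)\,v(z)\,\frac{dz}{2\pi i z} \;=\; \frac{1}{|\nodes|}\sum_{z\in\nodes} g(z) \;-\; \int_\Sigma g(z)\,\frac{dz}{2\pi i z},
\]
valid for every $g$ meromorphic in $\Omega$ whose only pole is at $z=0$. Substituting the piecewise definition \eqref{eq:vformu}, the $\gamma'/\gamma$ pieces on $\Sigma_{out}$ and $-\Sigma_{in}$ combine into $\tfrac{1}{|\nodes|}\cdot\tfrac{1}{2\pi i}\oint g(z)\,\tfrac{\gamma'(z)}{\gamma(z)}\,dz$ around a contour enclosing $\nodes$, which produces the discrete sum by the residue theorem since $\gamma$ has simple zeros exactly on $\nodes$; the extra $-1$ on $\Sigma_{out}$ contributes $-\int_{\Sigma_{out}} g(z)\,\tfrac{dz}{2\pi i z} = -\int_\Sigma g(z)\,\tfrac{dz}{2\pi i z}$, the deformation being legal because $g$ is analytic in the annulus between $\Sigma$ and $\Sigma_{out}$.

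Applying this identity to $g(z)=p_j(z)\,z^{-k}p_k^*(z)\,f(z)$ and using $z^{-k}p_k^*(z) = \overline{p_k(z)}$ on $\Sigma\supset\nodes$, the continuous integral becomes $\delta_{jk}$ by orthonormality of the $p_k$, hence
\[
  \det(1+K) \;=\; \det\!\Bigl[\,\tfrac{1}{|\nodes|}\sum_{z\in\nodes} p_j(z)\overline{p_k(z)}f(z)\,\Bigr]_{j,k=0}^{n-1}.
\]
Expanding $p_j(z)=\sum_{a\le j} L_{ja}z^{a}$ exhibits this Gram-type matrix as $L\,T\,L^*$, where $L$ is lower triangular with $L_{kk}=\kappa_k$ and $T$ is (a transpose of) the discrete Toeplitz matrix defining $\To_n(f,\nodes)$; its determinant equals $\prod_{k=0}^{n-1}\kappa_k^{2}\cdot \To_n(f,\nodes)$, and the classical formula $\ToC_n(f)=\prod_{k=0}^{n-1}\kappa_k^{-2}$ finishes \eqref{z3-17-1}. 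The step that demands real care is the residue identity: juggling the contour orientations, the signs in \eqref{eq:vformu}, and the pole of $g$ at $0$ simultaneously is the main bookkeeping obstacle, but once it is in place the rest is routine Szeg\H{o} polynomial theory.
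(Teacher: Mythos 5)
Your argument is correct and is essentially the paper's proof run in the reverse direction: you start from $\det(1+K)$, use the Christoffel--Darboux expansion to reduce it to an $n\times n$ determinant, apply the residue theorem to turn the $\gamma'/\gamma$ contour integrals into the discrete sum over $\nodes$, and finally factor out $\prod\kappa_k^2=\ToC_n(f)^{-1}$; the paper instead starts from $\To_n(f,\nodes)$, applies Cauchy's integral formula, performs the row/column reduction (producing the same $\prod\kappa_k^{-2}$ prefactor), isolates $\delta_{jk}$ via orthonormality, and then invokes $\det(1+AB)=\det(1+BA)$ with Christoffel--Darboux. Since the two directions use exactly the same ingredients and intermediate identities, this counts as the same approach rather than a genuinely different one.
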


\begin{rmk}
Recall that the Christoffel-Darboux kernel for the orthogonal polynomials on the unit circle is
\begin{equation}\label{eq:CDformu}
  	K_{CD}(z,w)= \sum_{k=0}^{n-1} \overline{p_k(z)} p_k(w)
	= \frac{\overline{p^*_{n}(z)} p^*_n(w) - \overline{p_n(z)}p_n(w)}{1-\bar{z}w}.
\end{equation}
The kernel in~\eqref{eq:Kconti} satisfies $K_{conti}(z,w)= K_{CD}(\bar{z}^{-1},w)$.
\end{rmk}

Note that only the term $v(z)v(w)$ depends on the discrete set $\nodes$ on the right-hand-side of~\eqref{z3-17-1}.

As a special case, when $\nodes=\{z: z^m=1\}$,  we can take $\gamma(z)= z^m-1$.
In this case,
\begin{equation}\label{eq:vforCor}
  	v(z):=\begin{cases}
    	\frac{z^m}{1-z^m} ,&z\in \Sigma_{in},\\
   	\frac{z^{-m}}{1-z^{-m}} ,&z\in \Sigma_{out}.
  \end{cases}
\end{equation}
Observe that $v(z)$ decays exponentially on $\Sigma_{in}$ and $\Sigma_{out}$. From this we can derive the following result when $f$ is fixed and $m$ and $n$ tend to infinity 
easily. See Section~\ref{sec:THf} for the proof. Note that if $n$ is fixed, then the result holds trivially.



\begin{cor}\label{cor:00}
Let $f$ satisfy the assumptions of Theorem~\ref{thm:0} and we assume that $f(z)>0$ for all $|z|=1$.
Let $\nodes=\nodes_m=\{z\in\C : z^m=1\}$. Then there is a positive constant $c$ such that
\begin{equation}
	\To_n(f, \nodes_m)= \ToC_n(f)(1+O(e^{-c(m-n)})
\end{equation}
as $m-n\to \infty$ and $n\to \infty$.
\end{cor}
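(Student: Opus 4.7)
The plan is to apply Theorem~\ref{thm:0} with $\gamma(z)=z^m-1$, so that $v$ is given by~\eqref{eq:vforCor}; this reduces the statement to showing that the Fredholm determinant on the right-hand side of~\eqref{z3-17-1} equals $1+O(e^{-c(m-n)})$. The two inputs needed are the bounds $|v(z)|\le C(1-\epsilon)^m$ on $\Sigma_{in}$ and $|v(z)|\le C(1+\epsilon)^{-m}$ on $\Sigma_{out}$ that follow immediately from~\eqref{eq:vforCor}, together with uniform bounds on the orthonormal polynomials. For the latter, I shrink $\epsilon>0$ so that $f$ is analytic and nowhere vanishing on the closed annulus $\{1-\epsilon\le|z|\le 1+\epsilon\}$; Szeg\H{o}'s strong asymptotics then give $|p_n^*(z)|\le C$ uniformly on this annulus for all large $n$, and the identity $z^{-n}p_n(z)=\overline{p_n^*(\bar z^{-1})}$ supplies $|z^{-n}p_n(z)|\le C$ as well.

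Viewing $K$ as a $2\times 2$ block operator on $L^2(\Sigma_{in})\oplus L^2(\Sigma_{out})$ and using the Christoffel-Darboux sum $K_{conti}(z,w)=\sum_{k=0}^{n-1}\overline{p_k(\bar z^{-1})}p_k(w)$, a direct case analysis of the four blocks yields
\[
\sup_{\Sigma_{in}\times\Sigma_{in}}|K|=O\bigl(n(1-\epsilon)^m\bigr),\qquad \sup_{\Sigma_{out}\times\Sigma_{out}}|K|=O\bigl(n(1+\epsilon)^{-m}\bigr),
\]
\[
\sup_{\Sigma_{out}\times\Sigma_{in}}|K|=O\bigl(\rho^{m/2}\bigr),\qquad \sup_{\Sigma_{in}\times\Sigma_{out}}|K|=O\bigl(\rho^{m/2-n}\bigr),\qquad\rho:=\tfrac{1-\epsilon}{1+\epsilon}<1.
\]
The first three bounds are already $O(e^{-c(m-n)})$ for some $c>0$ (the $n$ prefactors are easily absorbed since $n\le m$), but the $\Sigma_{in}\times\Sigma_{out}$ block, which retains the dangerous $z^{-n}p_n^*(z)p_n(w)$ contribution of $K_{conti}$, grows in $n$. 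To fix this I conjugate $K$ by the bounded invertible multiplication operator $M_\phi$ with $\phi\equiv 1$ on $\Sigma_{in}$ and $\phi\equiv\rho^{-n/2}$ on $\Sigma_{out}$: this preserves the Fredholm determinant and rescales the two off-diagonal blocks by $\phi(z)\phi(w)^{-1}=\rho^{\pm n/2}$, which converts both of them into $O(\rho^{(m-n)/2})$ while leaving the other two blocks intact. Consequently $\sup|\tilde K|=O(e^{-c(m-n)})$, where $\tilde K:=M_\phi K M_\phi^{-1}$.

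The conjugated kernel $\tilde K$ is continuous on the compact contour $\Sigma_{in}\cup\Sigma_{out}$, hence trace class, and
\[
|\mathrm{tr}(\tilde K^j)|\;\le\;\bigl(|\Sigma_{in}\cup\Sigma_{out}|\cdot\sup|\tilde K|\bigr)^j\;\le\;\bigl(Ce^{-c(m-n)}\bigr)^j.
\]
For $m-n$ sufficiently large the series $\log\det(1+\tilde K)=\sum_{j\ge 1}(-1)^{j-1}j^{-1}\mathrm{tr}(\tilde K^j)$ converges geometrically and is itself $O(e^{-c(m-n)})$; exponentiating gives $\det(1+K)=\det(1+\tilde K)=1+O(e^{-c(m-n)})$, which is the claim. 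The main subtle point is that a naive block-by-block estimate of $K$ itself cannot succeed, because the factor $z^{-n}$ in $K_{conti}$ makes the $\Sigma_{in}\times\Sigma_{out}$ block large in $n$; the conjugation by $M_\phi$ is the key trick that redistributes this $n$-dependent prefactor between the two off-diagonal blocks so that the exponential smallness of $v$ on both contours combines to give the clean $m-n$ decay.
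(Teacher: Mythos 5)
Your proof is correct and follows essentially the same route as the paper: apply Theorem~\ref{thm:0} with $\gamma(z)=z^m-1$, conjugate the kernel so that the exponential smallness of $v$ on both $\Sigma_{in}$ and $\Sigma_{out}$ combines into uniform decay in $m-n$, and conclude from a pointwise bound on the conjugated kernel. Your piecewise-constant conjugator $M_\phi$ has exactly the same modulus on $\Sigma_{in}\cup\Sigma_{out}$ as the paper's factor $(z/w)^{n/2}$, so the two conjugations are equivalent; the only cosmetic differences are that you use the Christoffel--Darboux sum rather than the determinant form and you spell out the Fredholm-series/trace step that the paper leaves implicit.
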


In many applications we are interested in the ratio $\To_n(f, \nodes_m)/\ToC_n(f)$ where $f$ depends on  $n$  and another parameter, say $t$, in the limit as $m,n, t\to \infty$.
An advantage of using the formula~\eqref{z3-17-1} over the Toeplitz determinants is that
one may be able to find the asymptotic of the ratio even if it is not easy to obtain the asymptotics of the Toeplitz determinants themselves.
See Remark~\ref{rmk:sec4} in Section~\ref{sec:SRW}.

\medskip

We also consider discrete Hankel determinants. Let $\nodes$ be a discrete subset of $\R$. For a function $f$ on $\nodes$, we denote by
\begin{equation}
	\Ha_n(f, \nodes)= \det\bigg[ \sum_{x\in \nodes} x^{j+k} f(x) \bigg]_{j,k=0}^n
\end{equation}
the discrete Hankel determinant. For a function $f$ on $\R$, the continuous Hankel determinant is
\begin{equation}
	\HaC_n(f)= \det\bigg[ \int_{\R} x^{j+k} f(x) dx \bigg]_{j,k=0}^n.
\end{equation}
See Theorem~\ref{thm:2} for an analogue of Theorem~\ref{thm:0} in the Hankel setting.
In the next subsection, we use this theorem to study non-intersecting processes.

\subsection{Width of non-intersecting Brownian bridges}\label{sec:withNiBm}

The non-intersecting processes have been studied extensively in relation to random matrix theory, directed polymers, and random tilings (see, e.g., \cite{Forrester01, Baik00,Johansson03,TracyWidom07}). In this paper, we consider the `width' of three processes. 
We discuss the results on the Brownian bridges in this section. Symmetric simple random walks in both continuous time and discrete time are considered in Section~\ref{sec:SRW}.

Let $X_i(t)$, $i=1, \cdots, n$, be independent standard Brownian motions conditioned that $X_1(t)<X_2(t)< \cdots< X_n(t)$ for all $t\in (0,1)$  and $X_i(0)=X_i(1)=0$ for all $i=1, \cdots, n$.
The \emph{width} is defined as
\begin{equation}\label{eq:1}
	\widthf_n:=\sup_{0\le t\le 1} \left(X_{n}(t)- X_1(t) \right).
\end{equation}
Note that the event that $W_n< M$ equals the event that the Brownian motions stay in the chamber
$x_1<x_2<\cdots<x_n<x_1+M$ for all $t\in (0,1)$.
An application of  the Karlin-McGregor argument in the chamber \cite{KM, HobsonWerner} implies the following formula. See Section~\ref{sec:HBMfo} for the proof.

\begin{prop}\label{prop:BMHank}
Let $\widthf_n$ be defined in~\eqref{eq:1}. Then
\begin{equation}
\label{eq:4}
    \Prob\left(\widthf_n< \width\right) =
    \frac{\left(\frac{\sqrt{2}\pi}{\width\sqrt{n}}\right)^n}{ \HaC_n(F)} \int_0^1      \Ha_n(F, \nodes_s)
    d\s,\qquad F(x)=e^{-nx^2},
\end{equation}
where
\begin{equation}
    \nodes_{\s}:=\bigg\{\frac{\sqrt{2}\pi}{\width\sqrt{n}}(m-{\s}): m\in\Z\bigg\}.
 \end{equation}
\end{prop}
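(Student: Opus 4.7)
The plan is to realize $\{\widthf_n < \width\}$ as the event that $(X_1(t),\ldots,X_n(t))$ stays in the affine Weyl chamber $C_\width := \{x_1 < \cdots < x_n < x_1+\width\}$ of $\tilde A_{n-1}$ for all $t\in(0,1)$, and to apply the Karlin--McGregor method of images inside $C_\width$. To handle the degenerate endpoints, I first pin the bridges at a perturbed configuration $\mathbf y_\epsilon = (\epsilon_1,\ldots,\epsilon_n) \in C_\width$ near the origin, so that $\Prob(\widthf_n<\width) = \lim_{\epsilon\downarrow 0} p^{C_\width}_1(\mathbf y_\epsilon,\mathbf y_\epsilon)/\det[p_1(y_i,y_j)]$, where the denominator is the standard Karlin--McGregor non-intersecting density (with $p_t$ the one-dimensional Brownian transition density) and the numerator, by the classical image-sum formula for diffusions in a fundamental chamber of a crystallographic reflection group, equals $\sum_{w\in \tilde A_{n-1}} (-1)^{\ell(w)} \prod_i p_1(\epsilon_i, (w\mathbf y_\epsilon)_i)$.

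Next, decompose $\tilde A_{n-1} = S_n \ltimes Q$ with root lattice $Q = \{\mathbf k\in\Z^n : \sum k_i = 0\}$ acting by $\mathbf y \mapsto \mathbf y + \width\mathbf k$. Translations have even length---e.g.\ $\ell(\tau_{\width\mathbf k}) = \sum_{i<j}|k_i - k_j| \equiv \sum_{i<j}(k_i - k_j) = -2\sum_i i k_i \equiv 0 \pmod 2$---so $(-1)^{\ell(\sigma\tau_{\mathbf k})} = \mathrm{sgn}(\sigma)$. To eliminate the constraint $\sum k_i = 0$, I use the Fourier identity $\mathbf 1_{\sum k_i=0} = \int_0^1 e^{2\pi is\sum k_i}\,ds$, which decouples the constrained sum into an unconstrained product and yields
\[
p^{C_\width}_1(\mathbf y_\epsilon,\mathbf y_\epsilon) = \int_0^1 \det\bigl[Q^{(s)}_1(\epsilon_i,\epsilon_j)\bigr]_{i,j=1}^n \, ds, \qquad Q^{(s)}_t(x,y):=\sum_{m\in\Z} p_t(x,y+\width m)e^{2\pi ism}.
\]
Poisson summation produces $Q^{(s)}_1(x,y) = \frac{1}{\width}\sum_k e^{-\xi_k^2/2} e^{i\xi_k(y-x)}$ with $\xi_k = 2\pi(k-s)/\width$, and the rescaling $\xi_k = \sqrt{2n}\,x_k^{(s)}$ identifies $x_k^{(s)}$ with the lattice $\nodes_s$ and turns $e^{-\xi_k^2/2}$ into $F(x_k^{(s)})$.

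Taking $\mathbf y_\epsilon \to \mathbf 0$: the separable form of $Q^{(s)}_1$ combined with Cauchy--Binet rewrites the determinant as a sum over ordered index sets $k_1<\cdots<k_n \in \Z$; Taylor-expanding each $e^{i\xi_k\epsilon_j}$ in $\epsilon_j$ extracts the common Vandermonde $\prod_{i<j}(\epsilon_j-\epsilon_i)^2$ and, after the rescaling $\xi \mapsto x$, leaves $(2n)^{\binom{n}{2}}\Ha_n(F,\nodes_s)$ times the explicit constant $\width^{-n}\prod_{j=0}^{n-1}(j!)^{-2}$. The same Taylor machinery gives $\det[p_1(\epsilon_i,\epsilon_j)] \sim (2\pi)^{-n/2}\prod_{j=0}^{n-1}(j!)^{-1}\prod_{i<j}(\epsilon_j-\epsilon_i)^2$; the Vandermondes cancel, and Mehta's evaluation $\HaC_n(F) = (2\pi)^{n/2}\prod_{j=0}^{n-1}j!/(2n)^{n^2/2}$ collapses the ratio of prefactors to $(\sqrt 2\pi/(\width\sqrt n))^n/\HaC_n(F)$, matching the claim. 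The main subtleties are justifying the image-sum identity in the affine chamber and the sign check for translations; once these are in hand, the Fourier/Poisson and Cauchy--Binet/Taylor manipulations are routine.
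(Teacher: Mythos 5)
Your proposal follows the same route as the paper: Karlin--McGregor image sum in the affine chamber $\{x_1<\cdots<x_n<x_1+\width\}$, Fourier decoupling of the constraint $\sum h_j=0$, Poisson summation, Andreief/Cauchy--Binet, and a Vandermonde Taylor expansion to send the pinning configuration to the origin. The only cosmetic difference is that you invoke the general Gessel--Zeilberger image-sum formula for the affine Weyl group $\tilde A_{n-1}$ and verify the translation-length parity by hand, whereas the paper's Lemma~\ref{z10-31-1} arrives at the identical image sum $\sum_{\sum h_j=0}\det[p(\alpha_j-\beta_k+h_k\width)]$ via a direct path-swapping involution, which avoids the sign bookkeeping altogether.
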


From the Hankel analogue of Theorem~\ref{thm:0}, the asymptotics of the above probability can be studied by using the orthogonal polynomials with respect to $e^{-nx^2}$, i.e. Hermite polynomials. We obtain:

\begin{thm}
\label{thm:1}
Let $\widthf_n$ be the width of $n$ non-intersecting Brownian bridges with duration $1$ given in~\eqref{eq:1}. Then for every $\x\in\R$,
\begin{equation}\label{eq:thm1formula}
    \lim_{n\to\infty}
    \Prob\left( (\widthf_n-2\sqrt{n}) 2^{2/3}n^{1/6}
    \le \x \right)
    = F(\x)
\end{equation}
where $F(\x)$ is the GUE Tracy-Widom distribution function \cite{TW94}.
\end{thm}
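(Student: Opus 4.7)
The plan is to apply the Hankel analogue of Theorem~\ref{thm:0} (stated below as Theorem~\ref{thm:2}) to the formula in Proposition~\ref{prop:BMHank}, reducing $\Prob(\widthf_n<\width)$ to an $\s$-integral of Fredholm determinants built from Hermite polynomials, and then to extract the Tracy--Widom limit by a steepest-descent analysis at the soft edge of the Hermite spectrum.

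The continuous orthogonal polynomials for the weight $F(x)=e^{-nx^2}$ are (rescaled) Hermite polynomials, whose large-$n$ equilibrium measure is the semicircle on $[-\sqrt{2},\sqrt{2}]$. The Hankel analogue of Theorem~\ref{thm:0} replaces $\Ha_n(F,\nodes_\s)/\HaC_n(F)$ by a Fredholm determinant $\det(1+\KK_\s)$ on a pair of contours encircling $\R$, with kernel built from the Hermite Christoffel--Darboux kernel, the weight $F$, and a factor $v$ depending on the lattice $\nodes_\s$ through an entire function vanishing simply on $\nodes_\s$ (for example a sine/theta-type function of $z/h-\s$, where $h=\sqrt{2}\pi/(\width\sqrt{n})$ is the lattice spacing). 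Substituting into Proposition~\ref{prop:BMHank} gives
\begin{equation*}
  \Prob(\widthf_n<\width) = \bigg(\frac{\sqrt{2}\pi}{\width\sqrt{n}}\bigg)^n \int_0^1 \det(1+\KK_\s)\,d\s.
\end{equation*}

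Next, I would set $\width=2\sqrt{n}+2^{-2/3}n^{-1/6}\x$ and analyse the right-hand side by steepest descent. By the Plancherel--Rotach asymptotics, the Hermite Christoffel--Darboux kernel rescales to the Airy kernel under the edge scaling $z=\pm\sqrt{2}+2^{-1/2}n^{-2/3}\zeta$. I would deform the contours $\Sigma_{\iin},\Sigma_{\oou}$ onto the steepest-descent paths of the phase function associated with $p_n(z)e^{-nz^2/2}$, which pass through both edges $\pm\sqrt{2}$; by the $z\mapsto -z$ symmetry of the weight the two edges contribute identically, and elsewhere the integrand is exponentially suppressed. Since $h\sim 1/n$ is much finer than the edge scale $n^{-2/3}$, the factor $v$ does \emph{not} decay exponentially as in Corollary~\ref{cor:00}; instead, after rescaling, it becomes a bounded, rapidly oscillating function of the Airy variable, and its $\s$-average picks out only the zero-frequency component. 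This produces an Airy-type trace-class operator whose Fredholm determinant is exactly $F(\x)=\det(1-K_{\mathrm{Ai}})_{L^2(\x,\infty)}$, with the parameter $\x$ entering through the lattice shift relative to $\pm\sqrt{2}$.

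The main obstacle is the delicate balance between the explicit prefactor $(\sqrt{2}\pi/(\width\sqrt{n}))^n$, which tends to $0$ exponentially, and $\det(1+\KK_\s)$, which correspondingly \emph{grows} of order $(\width\sqrt{n})^n$. This growth reflects that, unlike the regime of Corollary~\ref{cor:00}, here the upper constraint in the equilibrium measure problem is active ($h$ is on the same scale as the mean eigenvalue spacing of the Hermite ensemble), so the discreteness genuinely modifies the leading-order behavior. Tracking the subleading terms in the steepest-descent analysis precisely enough that these two exponentially large factors cancel, uniformly in $\s$, and establishing sufficient trace-norm control of $\KK_\s$ to justify the $\s$-integration and the passage to the Airy limit, is the technical heart of the argument.
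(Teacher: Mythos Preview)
Your proposal has the right overall architecture but contains two substantive errors that would derail the argument.

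First, the exponential prefactor problem you flag as ``the technical heart'' is an artifact of not exploiting the auxiliary function $b$ in Theorem~\ref{thm:2}. The paper takes $f(z)=d^{-1}e^{-nz^2}$ and $b(z)=d$ with $d=\width\sqrt{n}/(\sqrt{2}\pi)$; since $d^{-n}\Ha_n(F,\nodes_s)=\Ha_n(d^{-1}F,\nodes_s)$ and $fb=F$, the prefactor $(\sqrt{2}\pi/(\width\sqrt{n}))^n$ is absorbed \emph{exactly}, yielding $\Prob(\widthf_n<\width)=\int_0^1\det(1+K_s)\,ds$ with no exponential imbalance whatsoever. With this choice $v_s(z)=e^{\pm 2i\alpha(z)}/(1-e^{\pm 2i\alpha(z)})$, $\alpha(z)=\pi(dz+s)$, which decays exponentially off the real axis, and the Fredholm determinant is $O(1)$ from the outset. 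There is no delicate cancellation to track.

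Second, and more fundamentally, the dominant contribution does \emph{not} come from the Hermite spectral edges $\pm\sqrt{2}$, and $v$ is not a bounded oscillation after rescaling. Since $d\sim n$, on contours with $\mathrm{Im}\,z\sim n^{-1/3}$ one has $|v_s(z)|\sim e^{-c n^{2/3}}$; thus $\sqrt{v_s(z)f(z)}$ contributes an exponential phase $\pm i\width z/\sqrt{2n}$ to the kernel. Combined with the Hermite $g$-function this produces the effective phase $\phi(z)=\int_z^{\sqrt{2}}\sqrt{s^2-2}\,ds\pm i\width z/\sqrt{2n}$, whose saddle (where $\sqrt{z^2-2}=\pm i\width/\sqrt{2n}\approx\pm i\sqrt{2}$) sits at $z=0$, not at $z=\pm\sqrt{2}$. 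The correct local variable is $z=i2^{1/6}n^{-1/3}\zeta$ near the origin; the Taylor expansion of $\phi$ there is cubic and yields the Airy-type kernel directly. The resulting limit is \emph{independent of $s$}, so the $s$-integral is trivial rather than an averaging device that selects a zero-frequency mode. Your edge-based steepest-descent with $s$-averaging of bounded oscillations would not land on $F(\x)$.
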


\begin{rmk}
The discrete Hankel determinant $\Ha_n(F, \nodes_0)$ with $s=0$ was also appeared in \cite{Forrester10} (see Model I and the equation (14), which is given in terms of a multiple sum) in the context of a certain normalized reunion probability of non-intersecting Brownian motions with periodic boundary condition.
In the same paper, a heuristic argument that a double scaling limit is $F(x)$
was discussed.
Nevertheless, the interpretation in terms of the width of non-intersecting Brownian motions
and a rigorous asymptotic analysis were not given in \cite{Forrester10}.
\end{rmk}


Non-intersecting Brownian bridges have been studied extensively using the determinantal point process point of view. It is known that as $n\to\infty$, the top path $X_n(t)$ converges to the curve $x= 2\sqrt{nt(1-t)}$, $0\le t\le 1$, and the fluctuations around the curve in an appropriate scaling is given by the Airy process $\mathcal{A}(\tau)$ \cite{Prahofer02}.
Especially near the peak location it is known that
(see e.g. \cite{Johansson04},
\cite{Alder09outlies})
\begin{equation}\label{eq:2}
  2n^{1/6}
  \left(X_{n}\left(\frac{1}{2}+\frac{2\tau}{n^{1/3}}\right)-\sqrt{n}\right)
  \rightarrow \mathcal{A}(\tau)-\tau^2
\end{equation}
in the sense of finite distribution.
By symmetry, $-X_1$ has the same fluctuations.
It is reasonable to expect that the fluctuations of the top path and the bottom path become
independent near $t=\frac12$ as $n\to\infty$.
Therefore, it is natural to conjecture that
\begin{equation}\label{eq:3}
  	2n^{1/6}(\widthf_n-2\sqrt{n})
  	\Rightarrow \max_{\tau\in \R} \left(\mathcal{A}^{(1)}(\tau)+\mathcal{A}^{(2)}(\tau)-2\tau^2 \right)
\end{equation}
where $\mathcal{A}^{(1)}$ and $\mathcal{A}^{(2)}$ are two independent copies of Airy processes.
Combining~\eqref{eq:3} and~\eqref{eq:thm1formula}, we expect the following interesting identity:
\begin{equation}
\label{z11-3-1}
    2^{-1/3} \cdot \max_{\tau\in \R} \left(\mathcal{A}^{(1)}(\tau)+\mathcal{A}^{(2)}(\tau)-2\tau^2 \right)  = \chi_{GUE},
\end{equation}
where $\chi_{GUE}$ is the GUE Tracy-Widom random variable.
Indeed we have the following identity:

\begin{thm}\label{z11-3-4}
Let $\mathcal{A}^{(1)}$ and $\mathcal{A}^{(2)}$ be two independent copies of Airy processes.
Then for any positive constants $\alpha$ and $\beta$,
\begin{equation}
\label{z11-7-1}
    (\alpha+\beta)^{-1/3} \cdot \max_{\tau\in \R} \left(\alpha^{1/3}\mathcal{A}^{(1)}(\alpha^{-2/3}\tau)
    +\beta^{1/3}\mathcal{A}^{(2)}(\beta^{-2/3}\tau)-(\alpha^{-1}+\beta^{-1})\tau^2 \right)  = \chi_{GUE}.
\end{equation}
\end{thm}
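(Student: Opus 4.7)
The plan is to derive (\ref{z11-7-1}) as a scaling limit of two different decompositions of the same random quantity in a solvable KPZ-class model. Take directed last passage percolation with i.i.d.\ exponential weights of rate one, and let $G(M,N)$ denote the last passage value from $(1,1)$ to $(M,N)\in\Z^2$.

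Every up-right lattice path from $(1,1)$ to $(N,N)$ crosses any prescribed antidiagonal $\{(i,j):i+j=2N_1\}$ at a unique point. Decomposing paths at this crossing gives the prelimit variational identity
\[
G(N,N)=\max_{i+j=2N_1}\bigl[\,G_1(i,j) + G_2(N-i+1,\,N-j+1)\,\bigr],
\]
in which $G_1$ and $G_2$ are LPP values in two disjoint sublattices and hence independent. Set $N=\lfloor(\alpha+\beta)T\rfloor$, $N_1=\lfloor\alpha T\rfloor$, and parametrize $(i,j)=(N_1+\xi T^{2/3},\,N_1-\xi T^{2/3})$.

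Now apply the KPZ asymptotics to both sides. For the left side, the Baik--Deift--Johansson theorem yields $(G(N,N)-4(\alpha+\beta)T)/(2^{4/3}T^{1/3})\Rightarrow(\alpha+\beta)^{1/3}\chi_{GUE}$. For the right side, a Taylor expansion of the shape function $(\sqrt M+\sqrt N)^2$ produces the parabolic drift $4\alpha T-\alpha^{-1}\xi^2 T^{1/3}$ for the center of $G_1$ and $4\beta T-\beta^{-1}\xi^2 T^{1/3}$ for that of $G_2$, while the Johansson / Pr\"ahofer--Spohn Airy process convergence, applied independently to $G_1$ (of longitudinal scale $\alpha T$) and $G_2$ (of longitudinal scale $\beta T$), produces the fluctuations $2^{4/3}\alpha^{1/3}T^{1/3}\mathcal{A}^{(1)}$ and $2^{4/3}\beta^{1/3}T^{1/3}\mathcal{A}^{(2)}$ evaluated at appropriate transverse coordinates; the symmetry $\mathcal{A}(-\cdot)\stackrel{d}{=}\mathcal{A}(\cdot)$ handles the sign reversal for $G_2$. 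Summing, dividing by $2^{4/3}T^{1/3}$, and making the linear change of variable $\xi\mapsto 2^{2/3}\xi$ turns the prelimit identity into exactly (\ref{z11-7-1}).

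The main obstacle is justifying the exchange of $\max_\xi$ with the $T\to\infty$ limit. Two tightness inputs are required: a lower-tail estimate on $\sup_{|\tau|\ge R}(\mathcal{A}(\tau)-\tau^2)$ as $R\to\infty$, which confines the argmax of the limit process to a compact window; and a matching uniform-in-$T$ transverse upper-tail estimate on the optimizer $\xi$ in the prelimit identity, which follows from standard moderate-deviation bounds for LPP. With both controls in place, the finite-dimensional convergence on compact $\xi$-intervals upgrades to the full distributional identity (\ref{z11-7-1}).
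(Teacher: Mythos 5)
Your proposal is essentially the paper's proof: decompose the point-to-point last passage time at an antidiagonal into a maximum of two independent LPP profiles, invoke Johansson's functional convergence of each profile to an Airy process minus a parabola, and close the argument with tail bounds that confine the prelimit optimizer to a compact transverse window. The only differences are cosmetic — the paper works with geometric weights so that Johansson's convergence theorem and his tail estimates (5.19)--(5.20) apply verbatim, whereas you chose exponential weights (changing the shape-function and scaling constants but not the structure), and your phrase ``lower-tail estimate on $\sup_{|\tau|\ge R}(\mathcal{A}(\tau)-\tau^2)$'' should read \emph{upper}-tail estimate, since what is needed is that this supremum is unlikely to exceed $s$.
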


It may be possible to establish~\eqref{eq:3} using 
the results obtained in \cite{CorwinHammond}, and therefore prove this theorem using~\eqref{eq:thm1formula}.
However, we do not follow this approach and instead give an independent proof of Theorem~\ref{z11-3-4}.
The proof is obtained by considering the point-to-point directed last passage time of a solvable directed last passage percolation model in two different ways.
This indirect proof is analogous to the proof of Johansson \cite{Johansson04} for the identity
\begin{equation}\label{eq:GOE}
    2^{2/3} \cdot \max_{\tau\in \R} \left(\mathcal{A}(\tau)-\tau^2 \right)  = \chi_{GOE},
\end{equation}
where $\chi_{GOE}$ stands for the GOE Tracy-Widom random variable.
Indeed~\eqref{z11-7-1} follows easily from the estimates already established in \cite{Johansson04}.
The proof is given in Section~\ref{sec:LPP}.
Considering other versions of directed last passage percolation models, one can also obtain other identities involving Airy processes and Brownian motions. See \cite{CorwinLiuWang}.

A  direct proof of~\eqref{eq:GOE} was recently obtained in \cite{CorwinQuastelRemenik}. This paper also obtained a Fredholm determinant formula for $\Prob(\mathcal{A}(\tau)\le g(\tau), \,  t\in [-L,L])$ for general non-random functions $g$. 
It is an interesting question to generalize this approach to random functions $g$ and use it to give a direct proof of~\eqref{z11-7-1}.

\subsubsection*{Organization of paper}

This paper is organized as follows.
In Section~\ref{sec:THf} we prove Theorem~\ref{thm:0} and its Hankel version. The proof of Corollary~\ref{cor:00} is also given in this section. The results on the width of non-intersecting Brownian processes, Proposition~\ref{prop:BMHank} and Theorem~\ref{thm:1}, are presented in Section~\ref{sec:NIB}.
The analogous results for symmetric simple random walks in both continuous-time and discrete-time are in Section~\ref{sec:SRW}.
Finally, the proof of Theorem~\ref{z11-3-4} is given in Section~\ref{sec:LPP}.

\subsubsection*{Acknowledgments}
We would like to thank Ivan Corwin, Gregory Schehr, and Dong Wang for useful conversations.
The work of Jinho Baik was supported in part by NSF grants DMS1068646.

\section{Discrete Toeplitz and Hankel determinants}\label{sec:THf}

In this section we prove Theorem~\ref{thm:0} and its Hankel version. At the end we prove Corollary~\ref{cor:00}.

\begin{proof}[Proof of Theorem~\ref{thm:0}]
From the Cauchy's integral formula,
\begin{equation}
  \sum_{z\in\nodes}z^{-j+k}f(z)=\int_{\Sigma_{out}\cup (-\Sigma_{in})}z^{-j+k}f(z)\frac{z\gamma'(z)}{\gamma(z)}\frac{dz}{2\pi iz}.
\end{equation}
Inserting this into the definition of the discrete Toeplitz determinant and performing simple row and column operations, we find that $\To_n(f, \nodes)$ equals
\begin{equation}
\label{z10-28-1}
  \frac{1}{\kappa_0^2\cdots\kappa_{n-1}^2}\det\left[\int_{\Sigma_{out}\cup (-\Sigma_{in})}\overline{p_j(\bar z^{-1})}p_k(z)f(z)\frac{z\gamma'(z)}{|\nodes|\gamma(z)}\frac{dz}{2\pi iz}\right]_{j,k=0}^{n-1}.
\end{equation}
Note that $\kappa_j$ are positive by definition.
Now from the general theory of orthogonal polynomials, $\frac1{\kappa_0^2\cdots\kappa_{n-1}^2}$  is precisely the continuous Toeplitz determinant $\ToC_n(f)$.
The orthonormality conditions of $p_k$ are $\delta_{jk}= \int_{|z|=1} \overline{p_j(z)}p_k(z) f(z)\frac{dz}{2\pi iz}$.
Using the fact that $z=\bar z^{-1}$ on the circle and using the analyticity of $f$
, these conditions imply that $\delta_{jk} = \int_{\Sigma_{out}} \overline{p_j(\bar z^{-1})}p_k(z)f(z)\frac{dz}{2\pi iz}$.
Using this
the determinant in~\eqref{z10-28-1} can be written as
\begin{equation}
  \det\left[\delta_{jk}+\int_{\Sigma_{out}\cup \Sigma_{in}}\overline{p_j(\bar z^{-1})}p_k(z)f(z)v(z)\frac{dz}{2\pi iz}\right]_{j,k=0}^{n-1}
\end{equation}
with $v$ defined in~\eqref{eq:vformu}.
Now the theorem follows by applying the general identity $\det(1+AB)=\det(1+BA)$ and using the Christoffel-Darboux formula.
\end{proof}

\begin{rmk}\label{thm:3}
Theorem~\ref{thm:0} can be slightly generalized as follows.
Let $b(z)$ be a non-trivial analytic function in a neighborhood of $\Sigma$ such that $b(z)\ge 0$ for $z\in \Sigma$ and let
$p_k$ be the orthonormal polynomials with respect to the measure $f(z)b(z) \frac{dz}{2\pi iz}$ on the circle.
Then
\begin{equation}
  \label{z10-30-3}
  	\To_n(f, \nodes)=\ToC_n(fb)\det\left(1+K\right)_{L^2(\Sigma_{in}\cup \Sigma_{out},\frac{dz}{2\pi iz})}
\end{equation}
with
\begin{equation}
  v(z)=\begin{cases}
    -\frac{z\gamma'(z)}{|\nodes|\gamma(z)}, \quad & z\in\Sigma_{in},\\
    \frac{z\gamma'(z)}{|\nodes|\gamma(z)}-b(z), \quad & z\in\Sigma_{out}.
  \end{cases}
\end{equation}
The proof is essentially  same.
\end{rmk}

The Hankel version is as follows. The proof is almost same as that of Theorem~\ref{thm:0} and we do not present it.
Assume:
\begin{enumerate}[(a)]
\item Let $\nodes$ be a (either finite or infinite) discrete subset of $\R$ with no accumulating points.
\item
Let $f(x)\ge 0$ be a non-trivial function on $\R$ which is analytic in a neighborhood $\Omega=\{z=x+iy: x\in \R, |y|<\delta\}$ of $\R$ for some $\delta>0$. We also assume that the discrete Hankel determinant $\Ha_n(f, \nodes)$ is well defined.
\item
Let $b(z)$ be a non-trivial analytic function in $\Omega$
such that $b(x)\ge 0$ for $x\in \R$ and $|z|^k |f(z)b(z)|\to 0$ as $|z|\to \infty$ in $\Omega$ for every $k\ge 0$.
\item
Let $\gamma(z)$ be an analytic function in $\Omega$ such that
$\gamma(x)$ vanishes exactly on $\nodes$, all the roots are simple, and
$|z^k f(z) \frac{\gamma'(z)}{\gamma(z)}|\to 0$ as $z\in \Omega$, $|Re(z)|\to \infty$ for all $k\ge 0$.
\end{enumerate}

Let $p_k(x)$ be the (continuous) orthonormal  polynomials with respect to the weight $f(x)b(x)$ on $\R$.
Let $\kappa_k$ denote the leading coefficient of $p_k(x)$.
Set the Christoffel-Darboux kernel
\begin{equation}
    	K_{CD}(z,w) =
	\frac{\kappa_{n-1}}{\kappa_n} \frac{p_n(z)p_{n-1}(w)-p_{n-1}(z)
p_n(w)}{z-w}.
\end{equation}
\begin{thm}
\label{thm:2}
Assuming (a)--(d) above, we have
\begin{equation}
  	\Ha_n(f, \nodes) =\HaC_n(fb)\det\left(1+K\right)_{L^2(C_{+}\cup C_{-},dz)},
\end{equation}
where $K$ is the integral operator with kernel
\begin{equation}
    	K(z,w) =
	K_{CD}(z,w) \sqrt{f(z)f(w)v(z)v(w)}
\end{equation}
where $C_{\pm}= \R \pm i\delta/2$, oriented from left to right, and
\begin{equation}\label{eq:vformu2}
  	v(z):=\begin{cases}
    -\frac{\gamma'(z)}{2\pi i\gamma(z)}-\frac{b(z)}{2},&z\in C_+,\\
    \frac{\gamma'(z)}{2\pi i \gamma(z)}-\frac{b(z)}{2},&z\in C_-.
  \end{cases}
\end{equation}
\end{thm}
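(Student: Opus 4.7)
The plan is to mirror the proof of Theorem~\ref{thm:0}, with the real line in place of the unit circle and the horizontal lines $C_\pm = \R\pm i\delta/2$ in place of the circles $\Sigma_{in}$ and $\Sigma_{out}$.

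First I would convert the discrete sum into a contour integral via the residue theorem. Since $\gamma$ is analytic in the strip $\Omega$ with only simple zeros exactly at the points of $\nodes$, for any function $g$ analytic in $\Omega$ with adequate decay as $|\text{Re}(z)|\to\infty$ we have
\[
  \sum_{x\in\nodes} g(x) \;=\; -\int_{C_+} g(z) \frac{\gamma'(z)}{2\pi i\gamma(z)}\,dz \;+\; \int_{C_-} g(z) \frac{\gamma'(z)}{2\pi i\gamma(z)}\,dz,
\]
where $C_\pm$ are both oriented left-to-right, coming from the counterclockwise contour $(-C_+)\cup C_-$ that encloses $\R$. Applying this with $g(z)=z^{j+k}f(z)$, and separately deforming $\int_\R z^{j+k}f(x)b(x)\,dx = \tfrac12\int_{C_+}+\tfrac12\int_{C_-}$ (justified by hypothesis (c)), their difference yields
\[
  \sum_{x\in\nodes} x^{j+k} f(x) \;=\; \int_\R z^{j+k} f(z) b(z)\, dz \;+\; \int_{C_+\cup C_-} z^{j+k} f(z) v(z)\, dz,
\]
with $v$ exactly the function in~\eqref{eq:vformu2}.

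Next I would carry out row and column operations replacing the monomials $z^j,z^k$ in the determinant by the orthonormal polynomials $p_j(z),p_k(z)$ for the weight $f(x)b(x)$ on $\R$. This extracts the factor $(\kappa_0^2\cdots\kappa_{n-1}^2)^{-1}=\HaC_n(fb)$, and using the orthonormality $\int_\R p_j p_k fb\, dx = \delta_{jk}$ the $(j,k)$-entry becomes
\[
  \delta_{jk} \;+\; \int_{C_+\cup C_-} p_j(z) p_k(z) f(z) v(z)\, dz.
\]
This determinant has the form $\det(I+AB)$ where each entry factors symmetrically as $\bigl(p_j(z)\sqrt{f(z)v(z)}\bigr)\cdot\bigl(p_k(z)\sqrt{f(z)v(z)}\bigr)$. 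Applying $\det(I+AB)=\det(I+BA)$ converts the determinant into a Fredholm determinant on $L^2(C_+\cup C_-, dz)$ with kernel $\sum_{k=0}^{n-1} p_k(z)p_k(w)\sqrt{f(z)v(z)f(w)v(w)}$, and the Christoffel-Darboux identity for orthonormal polynomials on $\R$ recognizes the sum as $K_{CD}(z,w)$, producing the claimed formula.

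The main point requiring care is the justification of the contour deformations in the first step: one must control $z^k f(z)b(z)$ and $z^k f(z)\gamma'(z)/\gamma(z)$ uniformly in the horizontal strip $\Omega$ as $|\text{Re}(z)|\to\infty$, which is exactly what the decay hypotheses (c) and (d) are tailored to guarantee. Once these deformations are justified, the remaining manipulations are algebraic and parallel the Toeplitz proof step by step, so I do not expect a further obstacle.
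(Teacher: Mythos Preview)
Your proposal is correct and follows exactly the approach the paper intends: the paper explicitly states that the proof of Theorem~\ref{thm:2} is ``almost same as that of Theorem~\ref{thm:0}'' and omits it, and your outline is precisely the natural adaptation of that argument to the real-line/Hankel setting (residue theorem on $C_\pm$ in place of Cauchy on $\Sigma_{in}\cup\Sigma_{out}$, orthonormal polynomials for $fb$ on $\R$, then $\det(I+AB)=\det(I+BA)$ and Christoffel--Darboux). The only cosmetic difference is that you subtract off the $\tfrac12\int_{C_+}+\tfrac12\int_{C_-}$ of $fb$ before doing the row/column operations rather than after, which is an inessential reordering.
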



We now prove Corollary~\ref{cor:00}.

\begin{proof}[Proof of Corollary~\ref{cor:00}]
Let $\epsilon>0$ be small enough so that $f(z)$ is analytic 
in the annulus $1-2\epsilon<|z|<1+2\epsilon$.
We now apply Theorem~\ref{thm:0} where we take $\Sigma_{in}$ and $\Sigma_{out}$  as the circles of radii $1-\epsilon$ and $1+\epsilon$ respectively.
Using the fact that the Fredholm determinant is invariant under conjugations, it is enough to prove that
\begin{equation}
\label{z10-29-1}
	  |(z/w)^{n/2}K(z,w)|=O(e^{-c(m-n)})
\end{equation}
uniformly for $z,w\in \Sigma_{in}\cup\Sigma_{out}$, for some constant $c>0$.

The asymptotics of orthonormal polynomials with respect to a fixed measure of form $f(z)\frac{dz}{2\pi z}$ on the unit circle are well known (see, for example, \cite{Szego}). 
When $f$ is positive and analytic on the circle, an explicit asymptotic expansion of $p_n(z)$  as $n\to \infty$ for all complex $z$ can be found in \cite{MFMS}. These results imply that
for a given $\epsilon>0$, there is a constant $C>0$ such that
\begin{equation}
      p_n(z), \,  p_n'(z)=\begin{cases}
        z^n O(e^{-Cn}),\qquad &|z|\ge 1+\epsilon,\\
        O(e^{-Cn}),&|z|\le 1-\epsilon,
      \end{cases}
\end{equation}
uniformly. Since $p_n^*(z)=z^n\overline{p_n(1/{\bar z})}$, the above estimates also hold with $p_n(z)$ replaced by $p_n^*(z)$.
Inserting these into~\eqref{eq:Kconti}, we find that $|(z/w)^{n/2}K_{conti}(z,w)|$ is
     \begin{equation}\label{eq:Kconzwcn}
       \begin{cases}
                (1-\epsilon)^{-n}(1+O(e^{-2Cn})),\qquad &|z|=|w|=1-\epsilon,\\
         (1+\epsilon)^{n}(1+O(e^{-2Cn})),\qquad &|z|=|w|=1+\epsilon,\\
          \left(\frac{1+\epsilon}{1-\epsilon}\right)^{n/2}(1+O(e^{-2Cn})),& |z|=1\mp \epsilon, \, |w|=1\pm\epsilon.
       \end{cases}
     \end{equation}

On the other hand, from the formula~\eqref{eq:vforCor}, it is easy to check that
     \begin{equation}\label{eq:vcorpfr}
       |v(z)|\le \begin{cases}
                2(1-\epsilon)^{m},\qquad &|z|=1-\epsilon,\\
         2(1+\epsilon)^{-m},\qquad &|z|=1+\epsilon,
       \end{cases}
     \end{equation}
for all large enough $m$. Inserting~\eqref{eq:Kconzwcn} and~\eqref{eq:vcorpfr} into~\eqref{eq:KforToe}, we obtain~\eqref{z10-29-1}. This completes the proof.
\end{proof}

\section{Non-intersecting Brownian bridges}\label{sec:NIB}

\subsection{Hankel determinant formula}\label{sec:HBMfo} 

We prove Proposition~\ref{prop:BMHank}.

Let $D_n:=\{x_0<x_1<\cdots<x_{n-1}\}\subset \R^n$.
Fix $\alpha=(\alpha_0, \cdots, \alpha_{n-1})\in D_n$ and
$\beta=(\beta_0, \cdots, \beta_{n-1})\in D_n$.
Let $X(t)=(X_0(t),X_1(t),\cdots,X_{n-1}(t))$ be  $n$ independent standard Brownian motions.
We denote the conditional probability that $X(0)=\alpha$ and $X(1)=\beta$ by $\Prob_{\alpha, \beta}$.
Let $\Ne_0$ be the event that $X(t)\in D_n$ for all $t\in (0,1)$
and let $\Ne_1$ be the event that $X(t)\in D_n(\width):= \{ x_0<x_1<\cdots<x_{n-1}< x_0+\width\}$.
Then $\Prob (\widthf_n < \width)$ may be computed by taking the limit of
$\frac{\Prob_{\alpha, \beta}( \Ne_1 )}{\Prob_{\alpha, \beta}(\Ne_0)}$ as
$\alpha, \beta\to 0$.

From the Karlin-McGregor argument \cite{KM},
$\Prob_{\alpha, \beta}(\Ne_0)= \frac{\det\left[p(\alpha_j-\beta_k)\right]_{j,k=0}^{n-1}}{\prod_{j=0}^{n-1}p(\alpha_j-\beta_j)}$,
where $p(x)=\frac{1}{\sqrt{2\pi}} e^{-\frac{x^2}{2}}$.
On the other hand, the Karlin-McGregor argument in the chamber $D_n(\width)$ was given for example in \cite{HobsonWerner} and implies the following. For convenience of the reader, we include a proof.

\begin{lm}
\label{z10-31-1} The probability
$\Prob_{\alpha, \beta}(\Ne_1 )$ equals
\begin{equation}
\label{determinant formula for BM1}
	\frac1{\prod_{j=0}^{n-1}p(\alpha_j-\beta_j)} \sum_{\substack{h_j\in \Z \\ h_0+h_1+\cdots+h_{n-1}=0}}
  \det\left[ p(\alpha_j-\beta_k+h_k\width)\right]_{j,k=0}^{n-1}.
\end{equation}
\end{lm}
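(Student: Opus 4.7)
The plan is to apply a Karlin-McGregor reflection argument in the alcove $D_n(\width)$. The chamber is bounded by the $n-1$ walls $\{x_{j-1}=x_j\}$, $j=1,\ldots,n-1$, together with the single additional affine wall $\{x_{n-1}=x_0+\width\}$, and the group generated by the reflections across these walls is the affine symmetric group $\tilde{S}_n\cong S_n\ltimes Q_{\width}$, where $Q_{\width}=\{(\width h_0,\ldots,\width h_{n-1}):h_j\in\Z,\ \sum_j h_j=0\}$ is $\width$ times the $A_{n-1}$ root lattice. Any element can be written uniquely as $w=t_h\sigma$ with $t_h$ a translation by $h\in Q_{\width}$ and $\sigma\in S_n$, and $\text{sgn}(w)=\text{sgn}(\sigma)$.

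First I would establish the alcove analogue of the Karlin-McGregor identity,
\begin{equation*}
\Prob_{\alpha,\beta}(\Ne_1)\cdot\prod_{j=0}^{n-1}p(\alpha_j-\beta_j)=\sum_{w\in\tilde{S}_n}\text{sgn}(w)\prod_{j=0}^{n-1}p\bigl(\alpha_j-(w\beta)_j\bigr).
\end{equation*}
The argument is the usual pathwise cancellation. For any $n$-tuple of Brownian bridges from $\alpha$ to $\beta$ that at some time violates one of the inequalities defining $D_n(\width)$, one applies the reflection across the first violated wall from the first violation time onward. This yields a measure-preserving, sign-reversing involution on the set of paths exiting $D_n(\width)$, and the surviving contribution is $\Prob_{\alpha,\beta}(\Ne_1)$ itself; this is exactly the line of argument used in \cite{HobsonWerner}.

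Next I would exploit the semi-direct decomposition $\tilde{S}_n=S_n\ltimes Q_{\width}$ to split the sum as $\sum_{h\in Q_{\width}}\sum_{\sigma\in S_n}$. For each fixed $h$, the inner sum is the Leibniz expansion of the determinant $\det[p(\alpha_j-\beta_k-\width h_k)]_{j,k=0}^{n-1}$. After the change of index $h\mapsto -h$, which preserves the constraint $\sum_j h_j=0$, this yields precisely \eqref{determinant formula for BM1}.

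The main technical obstacle is justifying the pathwise cancellation and the absolute convergence of the sum over $h\in Q_{\width}$. Absolute convergence is straightforward from the Gaussian bound $|\det[p(\alpha_j-\beta_k-\width h_k)]|\le n!\,C_n\exp(-c\width^2\sum_j h_j^2)$, which is valid for a suitable constant $c>0$ because $\alpha,\beta$ remain in a bounded set. The pathwise cancellation requires that the first violation time and the identity of the first violated wall be almost surely well-defined random variables, which is standard for continuous Brownian paths; the strong Markov property together with the reflection symmetry of Brownian motion across each wall then ensures that the reflected path has the correct conditional law, so that the proposed involution is indeed measure-preserving.
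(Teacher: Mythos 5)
Your proposal is correct and is essentially the same argument as the paper's: the paper's set $\lift_\width(\beta)$ is exactly the $\tilde S_n$-orbit of $\beta$, the key property $\lift_\width(\beta)\cap D_n(\width)=\{\beta\}$ encodes the simple transitivity of $\tilde S_n$ on alcoves that underlies the uniqueness of the decomposition $w=t_h\sigma$, and the paper's explicit involution (swapping the two colliding walkers, or the shift $X_0\mapsto X_{n-1}-\width$, $X_{n-1}\mapsto X_0+\width$) is precisely reflection across the first violated wall of $D_n(\width)$. Both are the alcove Karlin--McGregor argument credited to Hobson--Werner.
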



\begin{proof}
For $\beta=(\beta_0, \cdots, \beta_{n-1})\in D(M)$,
let $\lift_\width(\beta)$ be the set of all $n$-tuples $(\beta'_{0}+h_0\width,\cdots,\beta'_{n-1}+h_{n-1}\width)$ where $(\beta'_0,\cdots,\beta'_{n-1})$ is an re-arrangment of $(\beta_0, \cdots, \beta_{n-1})$ and  $h_0,\cdots,h_{n-1}$ are $n$ integers of which the sum is $0$.  The key property of $\lift_\width(\beta)$ is that $\lift_\width(\beta)\cap D_n(\width)=\{\beta\}$.
Indeed note that since $\beta\in D_n(\width)$, we have $|\beta'_i-\beta'_j|<\width$ for all $i,j$.
Thus if $(\beta'_{0}+h_0\width,\cdots,\beta'_{n-1}+h_{n-1}\width)\in D_n(\width)$, then we have $h_0\le\cdots\le h_{n-1}\le h_0+1$. Since $h_0+\cdots+h_{n-1}=0$, this implies that $h_0=\cdots=h_{n-1}=0$.
This implies that $\beta'_j=\beta_j$ for $j$ and $\lift_\width(\beta)\cap D_n(\width)=\{\beta\}$.

Now we consider $n$ independent standard Brownian motions $X(t)$, $0\le t\le 1$, satisfying $X(0)=\alpha$ and $X(1)\in\lift_\width(\beta)$. Then one of the following two events happens:

(a) $X(t)\in D_n(\width)$ for all $t\in [0,1]$. In this case, $X(1)=\beta$.

(b) There exists a smallest time $t_{min}$ such that $X(t_{min})$ is on the boundary of the chamber $D_n(\width)$. Then almost surely one of the following two events happens: (b1) a unique pair of two neighboring Brownian motions intersect each other at time $t_{min}$,  (b2) $X_{n-1}(t_{min})-X_0(t_{min})=\width$.
By exchanging the two corresponding Brownian motions after time $t_{min}$ in the case (b1), or replacing $X_0(t),X_{n-1}(t)$ by $X_{n-1}(t)-\width,X_0(t)+\width$ respectively after time $t_{min}$ in the case (b2), we obtain two new Brownian motions.
Define $X^*(t)$ be the these two new Brownian motions together with the other $n-2$ Brownian motions.
Then clearly, $X^*(1)\in \lift_\width(\beta)$. It is easy to see that 
$(X^*)^*(t)=X(t)$ and hence this defines an involution on the event (b) almost surely.
By expanding the determinant in the sum in~\eqref{determinant formula for BM1} and applying the involution, we find that that this sum equals the probability that $X(t)$ is from $\alpha$ to $\beta$ such that $X(t)$ stays in $D_n(M)$. Hence Lemma~\ref{z10-31-1} follows.
\end{proof}

Define the  generating function
\begin{equation}\label{z10-04-2}
  g(x,\theta):=\sum_{h\in\Z}p(x+h\width)e^{i \width
  h\theta}.
\end{equation}
It is direct to check that the sum in ~\eqref{determinant formula for BM1} equals
$\frac{\width}{2\pi}\int_0^{\frac{2\pi}{\width}}
\det\left[ g(\alpha_j-\beta_k,\theta)\right]_{j,k=0}^{n-1} d\theta$.	
Thus, we find that
\begin{equation}\label{z9-28-1}
  	\frac{\Prob_{\alpha, \beta}( \Ne_1 )}{\Prob_{\alpha, \beta}(\Ne_0)}
  	=\frac{\frac{\width}{2\pi}\int_0^{\frac{2\pi}{\width}}
  \det\left[ g(\alpha_j-\beta_k,\theta)\right]_{j,k=0}^{n-1}d\theta}
  {\det\left[p(\alpha_j-\beta_k)\right]_{j,k=0}^{n-1}}.
\end{equation}
By taking the limit $\alpha,\beta\to 0$, we obtain:

\begin{lm}
\label{Hankel determinant formula for BM}
We have
 \begin{equation}
  \label{z9-29-3}
    \Prob\left(\widthf_n< \width\right)
    =\int_0^1 \frac{\left(\frac{\sqrt{2}\pi}{\width\sqrt{n}}\right)^n \sum_{x\in \nodes_{\s}^n}\Delta(x)^2\prod_{j=0}^{n-1} e^{-nx_j^2}}
    {\int_{x\in\R^n}\Delta(x)^2\prod_{j=0}^{n-1} e^{-nx_j^2}dx_j}d\s,
  \end{equation}
  where $\nodes_{\s}:=\left\{\frac{\sqrt{2}\pi}{\width\sqrt{n}}(m-{\s}): m\in\Z\right\}\subset \R$
    and $\Delta(x)$ denotes the the Vandermonde determinant of $x=(x_0, \cdots, x_{n-1})$.
\end{lm}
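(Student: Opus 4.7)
The starting point is the ratio identity~\eqref{z9-28-1}, so I take the limit $\alpha,\beta\to 0$ of both sides. Both $\det[p(\alpha_j-\beta_k)]$ and $\det[g(\alpha_j-\beta_k,\theta)]$ vanish in this limit (all rows and columns coalesce), so the ratio is an indeterminate $0/0$ form and I must extract the leading Taylor behavior in $\alpha_j,\beta_k$ from each determinant before dividing. The observation that makes the two limits compatible is that both kernels admit a Gaussian Fourier representation of parallel form, with entries of the shape $\mathrm{const}\cdot\sum e^{i(\alpha_j-\beta_k)\xi-\xi^2/2}$ where the ``sum'' is an integral for $p$ and a genuine discrete sum for $g$.

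For $p(x)=\frac{1}{\sqrt{2\pi}}e^{-x^2/2}$ the representation is the standard identity $p(x)=\frac{1}{2\pi}\int_\R e^{ixt-t^2/2}\,dt$. For $g(x,\theta)$ it comes from applying the Poisson summation formula to~\eqref{z10-04-2}: after the substitution $\s=\width\theta/(2\pi)$ one obtains $g(x,\theta)=\frac{1}{\width}\sum_{\ell\in\Z}e^{ixy_\ell-y_\ell^2/2}$ with $y_\ell=\frac{2\pi(\ell-\s)}{\width}$, and this is where the shifted lattice that will become $\nodes_\s$ first enters. Applying Andreief's identity (the continuous Cauchy--Binet) then rewrites each of the two determinants as an integration or summation of $\det[e^{i\alpha_j\xi_k}]\det[e^{-i\beta_j\xi_k}]$ against the Gaussian weight in the $\xi$-variables.

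Next I Taylor-expand the exponentials $e^{i\alpha_j\xi_k}$ and $e^{-i\beta_j\xi_k}$ and use Cauchy--Binet a second time. As $\alpha,\beta\to 0$ only the $n$ lowest-order monomials $\xi^0,\xi^1,\ldots,\xi^{n-1}$ survive to leading order, producing a common factor $\Delta(\alpha)\Delta(\beta)/\prod_{a=0}^{n-1}(a!)^2$ (the phases $i^{n(n-1)/2}$ and $(-i)^{n(n-1)/2}$ multiply to $1$) together with an additional $\Delta(\xi)^2$ in the remaining variables. The $\alpha,\beta$-dependent prefactor cancels between numerator and denominator, leaving the ratio of a sum over the lattice $\{y_\ell\}$ and an integral over $\R^n$, each weighted by $\Delta(\cdot)^2\prod e^{-(\cdot)^2/2}$. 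The rescalings $y_\ell=\sqrt{2n}\,x_\ell$ and $t=\sqrt{2n}\,x$ then convert both to $\Delta(x)^2\prod e^{-nx_j^2}$ with $x_\ell\in\nodes_\s$, and collecting the Jacobians together with the change of variable $\frac{\width}{2\pi}\,d\theta=d\s$ produces precisely the claimed prefactor $\bigl(\frac{\sqrt{2}\pi}{\width\sqrt{n}}\bigr)^n$. The main obstacle is the careful bookkeeping across the two successive Cauchy--Binet applications --- in particular verifying that only the lowest-order Taylor contributions survive uniformly in $\theta$ and that the $i$-phases balance --- while justifying the interchange of the $\theta$-integral with the $\alpha,\beta\to 0$ limit is a smaller technical point handled by dominated convergence.
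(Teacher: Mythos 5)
Your proposal is correct and follows essentially the same route as the paper: start from~\eqref{z9-28-1}, use the Gaussian Fourier representation of $p$ and Poisson summation for $g$, apply Andreief's identity, extract the leading Vandermonde behavior as $\alpha,\beta\to 0$, and change variables. The only cosmetic difference is that the paper cites the asymptotic $\det[e^{x_jy_k}]=c\,\Delta(x)\Delta(y)(1+O(y))$ directly, whereas you re-derive it via a Taylor expansion and a second Cauchy--Binet application, and you substitute $\s=\width\theta/(2\pi)$ earlier rather than at the end.
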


\begin{proof}
We insert $p(x)=\frac{1}{\sqrt{2\pi}} e^{-\frac{x^2}{2}}$ into~\eqref{z10-04-2} and then
use the Poisson summation formula to obtain
\begin{equation}\label{z10-04-3}
  	g(x,\theta)
	=\frac{1}{\width} \sum_{h\in\Z}
e^{-\frac{1}{2}(\frac{2\pi h}{\width}-\theta)^2
	+ i x(\frac{2\pi h}{\width}-\theta)}.
\end{equation}
Using the  Andreief's formula \cite{Andreief}, $\det\left[ g(\alpha_j-\beta_k,\theta)\right]_{j,k=0}^{n-1}$ equals
\begin{equation}
\begin{split}
     \frac{1}{n!\width^n}\sum_{h\in\Z^n} \det\left[ e^{i\alpha_j(\frac{2\pi h_k}M-\theta) } \right]_{j,k=0}^{n-1}
     \det\left[ e^{-i\beta_j(\frac{2\pi h_k}M-\theta) }  \right]_{j,k=0}^{n-1}
    \prod_{j=0}^{n-1}
    e^{-\frac{1}{2}(\frac{2\pi h_j}{\width}-\theta)^2}.
\end{split}
\end{equation}
Since $\det\left[e^{x_jy_k}\right]_{j,k=0}^{n-1}=c\Delta(x)\Delta(y)(1+O(y))$
with $c=\prod_{j=0}^{n-1}\frac{1}{j!}$ as $y\to 0$ for each $x$,
we find that
\begin{equation}\label{z10-04-5}
  \lim_{\alpha,\beta\to 0}\frac{\det\left[ g(\alpha_j-\beta_k,\theta)\right]_{j,k=0}^{n-1} }{c^2\Delta(\alpha)\Delta(\beta)}
  =\frac{(2\pi/\width)^{n(n-1)}}{n!\width^n}\sum_{h\in\Z^n}\Delta\left(h\right)^2  \prod_{j=0}^{n-1}
   e^{-\frac{1}{2}(\frac{2\pi h_j}{\width}-\theta)^2}
\end{equation}

On the other hand, using $p(x)= \frac{1}{2\pi}\int_{\R} e^{-\frac{1}{2}y^2+ixy}dy$,
\begin{equation}\label{z10-04-6}
  \lim_{\alpha,\beta\to 0}\frac{\det\left[ p(\alpha_j-\beta_k)\right]_{j,k=0}^{n-1}}{c^2\Delta(\alpha)\Delta(\beta)}
  =\frac{1}{(2\pi)^nn!}\int_{h\in\R^n}\Delta(h)^2
\prod_{j=0}^{n-1} e^{-\frac{1}{2}h_j^2} dh_j.
\end{equation}
Inserting~\eqref{z10-04-5} and~\eqref{z10-04-6} into~\eqref{z9-28-1}, we obtain~\eqref{z9-29-3} after appropriate changes of variables.
\end{proof}

Proposition~\ref{prop:BMHank} follows from Lemma~\ref{Hankel determinant formula for BM} immediately.

\subsection{Proof of Theorem~\ref{thm:1}}\label{sec:pfTh12}

We apply Theorem~\ref{thm:2} to Proposition~\ref{prop:BMHank}.
Set
\begin{equation}
	\al = \al_{M,n} := \frac{\width\sqrt{n}}{\sqrt{2}\pi}.
\end{equation}
Noting that $\al^{-n} \Ha_n(F, \nodes_s)= \Ha_n(\al^{-1} F, \nodes_s)$, we set
\begin{equation}
	f(z)=\al^{-1}e^{-nz^2}, \quad b(z)=\al,
	\quad \gamma(z)=\sin\left(\pi(\al z+s)\right)
\end{equation}
in Theorem~\ref{thm:2}.
Then $v(z)=v_\s(z)d$, where
\begin{equation}\label{eq:vsd}
  	v_\s(z):=\begin{cases}
             -\frac{\cos\left(\pi(\al z+s)\right)}
             {2i \sin\left(\pi(\al z+s)\right)}-\frac{1}{2}= \frac{e^{2i\alpha(z)}}{1-e^{2i\alpha(z)}} ,\quad &z\in C_+, \\ 
             \frac{\cos\left(\pi(\al z+s) \right)}
             {2i \sin\left(\pi(\al z+s) \right)}-\frac{1}{2}= \frac{e^{-2i\alpha(z)}}{1-e^{-2i\alpha(z)}} , \quad &z\in C_-,  
           \end{cases}
\end{equation}
where $\alpha(z)= \pi(\al z+s)$.
Let $p_j(x)=\kappa_j x^j+\cdots$ be the orthonormal polynomials with respect to
$f(x)b(x)=e^{-nx^2}$ on $\R$
and set
\begin{equation}
    	K_{CD}(z,w) =
	\frac{\kappa_{n-1}}{\kappa_n} \frac{p_n(z)p_{n-1}(w)-p_{n-1}(z)
p_{n}(w)}{z-w}.
\end{equation}
Then from Theorem~\ref{thm:2},
\begin{equation}\label{eq:asBMP}
    \Prob\left(\widthf_n<  \width\right)=\int_0^1 P_\s(\width)d\s,
    \quad P_\s(\width)=\det\left(1+K_\s\right)_{L^2(C_+\cup C_-,dz)}.
\end{equation}
where
\begin{equation}
    K_\s(z,w)= K_{CD}(z,w)
	v_\s(z)^{\frac{1}{2}}v_\s(w)^{\frac{1}{2}}e^{-\frac{n}{2}(z^2+w^2)}.
\end{equation}
We set (see~\eqref{eq:thm1formula})
\begin{equation}\label{eq:wsca0}
  	\width=2\sqrt{n}+2^{-2/3}n^{-1/6}\x,
\end{equation}
where $\x\in\R$ is fixed.

The asymptotic of $P_\s(\width)$ is obtained in two steps.
The first step is to find the asymptotics of the orthonormal polynomials for $z$ in complex plane.
The second step is to insert them into the formula of $K_\s$ and then to prove the convergence of an appropriately scaled operator in trace class.
It turns out that the most important information is the asymptotics of the orthonormal polynomials for $z$ close to $z=0$ with order $n^{-1/3}$.
Such asymptotics can be obtained from the method of steepest-descent applied to the integral representation of Hermite polynomials.
However, here we proceed using the Riemann-Hilbert method as a way of illustration since the orthonormal polynomials for the other two non-intersecting processes to be discussed in the next section are not classical and hence lack the integral representation.

For the weight $e^{-nx^2}$, the details of the asymptotic analysis of the Riemann-Hilbert problem can be found in \cite{DKMVZ3} and \cite{DeiftBook}. 
Let $Y(z)$ be the (unique) $2\times 2$ matrix which (a) is analytic in $\C\backslash\R$,
(b) satisfies $Y_+(z)=Y_-(z)\big(\begin{smallmatrix}1&e^{-nz^2}\\0&1
    \end{smallmatrix}\big)$ for $z\in\R$,
    and (c) $Y(z)=(1+O(z^{-1}))\big(\begin{smallmatrix}z^n&0 \\ 0 & z^{-n}
    \end{smallmatrix}\big)$ as $z\to\infty$.
It is well-known (\cite{FIK}) that
\begin{equation}
\label{z9-30-1}
  K_{CD}(z,w)=
	\frac{Y_{11}(z)Y_{21}(w)-Y_{21}(z)Y_{11}(w)}{-2\pi\ii(z-w)}.
\end{equation}
Let
\begin{equation}
  	g(z):=\frac{1}{\pi}\int_{-\sqrt{2}}^{\sqrt{2}}\log(z-s)\sqrt{2-s^2}ds
\end{equation}
be the so-called $g$-function. Here  $\log$ denotes the
the principal branch of the logarithm.
It can be checked that $-g_+(z)-g_-(z)+z^2$ is a constant independent of $z\in (-\sqrt{2},\sqrt{2})$. Set $\ell$ to be this constant:
\begin{equation}
  	l:=-g_+(z)-g_-(z)+z^2, \quad z\in(-\sqrt{2},\sqrt{2}).
\end{equation}
Set
\begin{equation}\label{eq:betade}
  m_{\infty}(z):=
  \begin{pmatrix}
                      \frac{\beta+\beta^{-1}}{2}&\frac{\beta-\beta^{-1}}{2\ii}\\
                      \frac{\beta-\beta^{-1}}{-2\ii}&\frac{\beta+\beta^{-1}}{2}
     \end{pmatrix},
                \qquad \beta(z):=\left(\frac{z-\sqrt{2}}{z+\sqrt{2}}\right)^{1/4},
\end{equation}
where the function $\beta(z)$ is defined to be analytic in $\C\backslash[-\sqrt{2},\sqrt{2}]$ and to satisfy
$\beta(z)\rightarrow1$ as $z\rightarrow\infty$.
Then the asymptotic results from the Riemann-Hilbert analysis is given in Theorem 7.171 in \cite{DeiftBook}:
\begin{equation}
\label{z9-29-7}
  Y(z)=e^{-\frac{nl}{2}\sigma_3}(\idn+Er(n,z))m_{\infty}(z)e^{\frac{nl}{2}\sigma_3}e^{ng(z)\sigma_3},
  \quad z\in\C\backslash\R,
\end{equation}
where the error term $Er(n,z)$ satisfies (see the remark after theorem 7.171)
$\sup_{|{\mathrm Im}\,z|\ge \eta} |Er(n,z)|\le \frac{C(\eta)}{n}$
for a positive constant $C(\eta)$, for each $\eta>0$.
An inspection of the proof shows that the same analysis yields the following estimate. The proof is basically the same and we do not repeat.

\begin{lm}
\label{z11-29-3}
Let $\eta>0$. There exists a constant $C(\eta)>0$ such that for each $0<\alpha<1$,
  \begin{equation}
     \sup_{z\in D_n} |Er(n,z)|\le \frac{C(\eta)}{n^{1-\alpha}},
  \end{equation}
where $D_n:=\{z: |{\mathrm Im}\,z|> \frac{\eta}{n^{\alpha}},|z\pm \sqrt{2}|>\eta\}$.
\end{lm}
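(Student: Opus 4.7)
The plan is to rerun the steepest-descent analysis of the Riemann-Hilbert problem for $Y$ that underlies Theorem 7.171 of \cite{DeiftBook}, with one small technical modification: the lens-opening contour is allowed to shrink toward the real axis at rate $n^{-\alpha}$ instead of being held at a fixed positive distance. Recall that the classical analysis uses the chain of explicit transformations $Y \to T \to S \to R$, where $T$ is the $g$-function conjugation of $Y$, $S$ is obtained from $T$ by opening a lens around the cut $[-\sqrt{2},\sqrt{2}]$, and $R := S\cdot P^{-1}$ with parametrix $P$ equal to $m_\infty$ outside two small disks around $\pm\sqrt{2}$ and equal to the standard Airy parametrix inside those disks. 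For $z$ outside both the lens and the Airy disks, the identity~\eqref{z9-29-7} amounts to $Er(n,z) = R(z) - \idn$.

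First I would take the two Airy disks of fixed radius $\eta/2$, so that $D_n$ avoids them by at least $\eta/2$, and take the lens contour to lie entirely inside the strip $|\mathrm{Im}\,w| \le \eta/(2n^\alpha)$, so that $D_n$ is outside the lens. On the lens contour, the jump of $R$ involves the factor $e^{n\phi(w)}$ where $\phi(z):=2g(z)-z^2+l$; an expansion of $\phi$ via the semicircle density shows $\mathrm{Re}\,\phi(w) \le -c\,|\mathrm{Im}\,w|$ in the bulk, so the jump on the shrunken lens is $O(e^{-c\eta n^{1-\alpha}})$. On the boundary of each Airy disk, the standard matching between the Airy parametrix and $m_\infty$ yields a jump of size $O(1/n)$, and the remaining auxiliary contours (extensions of the lens outside the cut, etc.) contribute only $O(e^{-cn})$.

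Next I would estimate $R(z)-\idn$ for $z\in D_n$ from the Cauchy integral
\begin{equation*}
R(z)-\idn \;=\; \frac{1}{2\pi i}\int_{\Sigma_R} \frac{R_-(w)(J_R(w)-\idn)}{w-z}\,dw.
\end{equation*}
For the Airy-circle piece, $|w-z|\ge \eta/2$ and the jump is $O(1/n)$, so this contribution is $O(1/n)$. For the lens piece, even though $|w-z|^{-1}$ can be as large as $2n^\alpha/\eta$, the super-exponential factor $e^{-c\eta n^{1-\alpha}}$ obliterates any polynomial loss, and the standard small-norm RH theory handles the iterative correction from $R_-$. Thus $\|R-\idn\|_{L^\infty(D_n)}=O(1/n)$, which is certainly bounded by $C(\eta)/n^{1-\alpha}$, and because $D_n$ lies outside the lenses and the Airy disks this also bounds $|Er(n,z)|$ for $z\in D_n$.

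The main obstacle is verifying that the standard Deift-Zhou small-norm machinery continues to apply when the lens contour is pushed down to distance $\eta/n^\alpha$ from the real axis. The Airy parametrix itself is unchanged because its disks remain of fixed size, so the essential verification is a quantitative lower bound on $-\mathrm{Re}\,\phi$ on the narrowed lens (uniform in $n$ and in the horizontal position of $w$ once $|w\mp\sqrt{2}|\ge \eta/2$), together with the $L^2$-boundedness of the Cauchy operator on the modified contour. Both are routine within the framework of \cite{DKMVZ3,DeiftBook}, which is why the authors can assert that "the proof is basically the same."
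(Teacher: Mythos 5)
Your overall strategy — shrink the lens contour to distance $O(n^{-\alpha})$ from the real axis while keeping the Airy disks of fixed radius, then read off the Deift--Zhou small-norm estimate — is the natural way to make precise the paper's remark that ``the proof is basically the same,'' and the structure of your argument is sound. Two small comments. First, there is a sign slip: with your definition $\phi(z) := 2g(z)-z^2+l = 2\int_z^{\sqrt{2}}\sqrt{s^2-2}\,ds$, one has $\mathrm{Re}\,\phi(w)>0$ (not $<0$) for $w$ in the upper lens region with $|{\rm Re}\,w|<\sqrt{2}$; indeed $\partial_v\,\mathrm{Re}\,\phi(u+i0^+)=2\sqrt{2-u^2}>0$. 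The lens jump for $R$ involves $e^{-n\phi}$, so the exponential smallness you invoke is correct, but the stated inequality on $\mathrm{Re}\,\phi$ should be reversed (or $\phi$ defined with the opposite sign, as in some references). Second, your estimate actually yields $|Er(n,z)|=O(n^{-1})$ on $D_n$ — the Airy-circle contribution dominates and the shrunken lens contributes only $O(n^{\alpha}e^{-c\eta n^{1-\alpha}})$ — which is stronger than, and certainly implies, the stated $C(\eta)/n^{1-\alpha}$; the paper's weaker form is what one would get by naively substituting $\eta\mapsto\eta/n^{\alpha}$ into the constant of the fixed-$\eta$ bound $C(\eta)/n$, but your more careful contour choice avoids that loss.
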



We now insert~\eqref{z9-29-7} into~\eqref{z9-30-1}, and  find the asymptotics of $K$.
Before we do so, we first note that the contours $C_+$ and $C_-$ in the formula of $P_\s(\width)$
can be deformed thanks to the Cauchy's theorem.
We choose the contours as follows, and we call them $C_1$ and $C_2$ respectively. Let $C_1$ be an infinite simple contour in the upper half-plane of shape shown in Figure~\ref{fig:BM}
satisfying
\begin{equation}
\label{z9-29-6}
  \text{dist}(\R,C_1)=O(n^{-1/3}),\qquad \text{dist}(\pm\sqrt{2},C_1)=O(1).
\end{equation}
Set $C_2=\overline{C_1}$. Later we will make a more specific choice of the contours.
\begin{figure}[htbp]
  \centering
  \includegraphics[scale=0.65]{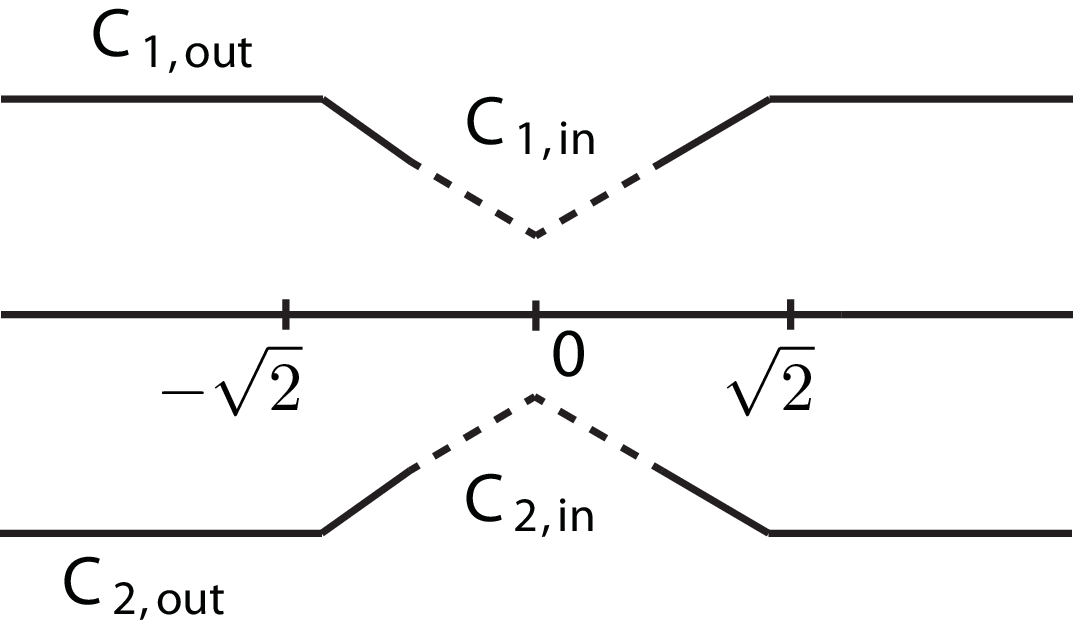}
\caption{$C_1=C_{1,\oou}\cup C_{1,\iin}, \quad C_{2}=C_{2,\oou}\cup C_{2,\iin}$}\label{fig:BM}
\end{figure}
Then from Lemma~\ref{z11-29-3}, $Er(n,z)= O(n^{-2/3})$ for $z\in C_1\cup C_2$.
Also since $\beta(z)=O(1)$, $\beta(z)^{-1}=O(1)$,
and $\text{arg}(\beta(z))\in \left(-\frac{\pi}{4},\frac{\pi}{4}\right)$ for $z\in C_1\cup C_2$,
we have $\frac{\beta-\beta^{-1}}{\beta+\beta^{-1}}=O(1)$ for $z\in C_1\cup C_2$.
Thus, we find from~\eqref{z9-29-7} that
\begin{equation}
\label{z9-29-8}
    Y_{11}(z)=e^{ng(z)}\frac{\beta(z)+\beta(z)^{-1}}{2}(1+O(n^{-2/3}))
\end{equation}
and
\begin{equation}
\label{z9-29-9}
  Y_{21}(z)=e^{ng(z)+nl}\left(O(n^{-2/3})+\frac{\beta(z)-\beta(z)^{-1}}{-2\ii}(1+O(n^{-2/3}))\right)
\end{equation}
for $z\in C_1\cup C_2$.
On the other hand, from the definition~\eqref{eq:vsd} of $v_\s$ and the choice of $C_1$
there exists a positive constant $c$ such that
\begin{equation}
  \begin{split}
    v_\s(z)&=
    \begin{cases}
    e^{ 2\ii\alpha(z)} (1+O(e^{-c n^{1/6}})), \quad z\in C_1, \\
    e^{ -2\ii\alpha(z)} (1+O(e^{-c n^{1/6}})), \quad z\in C_2,
    \end{cases}
  \end{split}
\end{equation}
where $\alpha(z)= \pi(\al z+s)= \frac{\width\sqrt{n}}{\sqrt{2}}z+\s\pi$ is defined earlier.
Therefore, we find that for $z, w\in C_1\cup C_2$,
\begin{equation}
\label{z12-13-1}
  	K_\s(z,w)
	=\frac{f_1(z)f_2(w)-f_2(z)f_1(w)}{-2\pi\ii(z-w)} e^{n\phi(z)+n\phi(w)},
\end{equation}
where
\begin{equation}\label{eq:phid}
  \phi(z):=
  \begin{cases}
  g(z)-\frac{1}{2}z^2+\frac{1}{2}l +\frac{\ii\width }{\sqrt{2n}}z,\qquad& \textrm{Im} (z)>0,\\ 
  g(z)-\frac{1}{2}z^2+\frac{1}{2}l -\frac{\ii\width }{\sqrt{2n}}z,\qquad& \textrm{Im} (z)<0, 
  \end{cases}
\end{equation}
and $f_1,f_2$ are both analytic in $\C\backslash\R$ and satisfy 
\begin{equation}\label{eq:f1f}
  f_1(z)=\begin{cases}
           e^{\ii\s\pi} \frac{\beta(z)+\beta(z)^{-1}}{2}(1+O(n^{-2/3})),\qquad& z\in C_1,\\
           e^{-\ii\s\pi} \frac{\beta(z)+\beta(z)^{-1}}{2}(1+O(n^{-2/3})),\qquad& z\in C_2,
         \end{cases}
\end{equation}
\begin{equation}\label{eq:f2f}
  f_2(z)=\begin{cases}
           e^{\ii\s\pi} \left(O(n^{-2/3})+\frac{\beta(z)-\beta(z)^{-1}}{-2\ii}(1+O(n^{-2/3}))\right),\qquad& z\in C_1,\\
           e^{-\ii\s\pi} \left(O(n^{-2/3})+\frac{\beta(z)-\beta(z)^{-1}}{-2\ii}(1+O(n^{-2/3}))\right),\qquad& z\in C_2.
         \end{cases}
\end{equation}
Note that $f_1(z)$, $f_2(z)$, and their derivatives are bounded on $C_1\cup C_2$.

So far we only used the fact that the contours $C_1$ and $C_2$ satisfy the conditions~\eqref{z9-29-6}. Now we make a more specific choice of the contours as follows (see Figure~\ref{fig:BM}).
For a small fixed $\epsilon>0$ to be chosen in Lemma~\ref{z12-7-1}, set
\begin{equation}\label{eq:sigmaa}
   \Sigma =\{u+\ii v : \, -\epsilon \le u \le \epsilon, \, v=n^{-1/3}+|u|/\sqrt{3} \}.
\end{equation}
Define $C_{1,in}$ to be the part of $\Sigma$ such that $|u|\le n^{-1/4}$:
\begin{equation}
  C_{1,\iin}=\{u+\ii v : \, -n^{-1/4}\le u\le n^{-1/4}, \, v=n^{-1/3}+|u|/\sqrt{3} \}.
\end{equation}
Define
$C_{1, \oou}$ be the union of $\Sigma\setminus C_{1, \iin}$ and the horizontal line segments $u+iv_0$, $|u|\ge \epsilon$ where $v_0$ is the maximal imaginary value of $\Sigma$ given by $v_0= n^{-1/3} + \epsilon/\sqrt{2}$.
Set $C_1= C_{1, \iin}\cup C_{1, \oou}$. Define $C_2=\overline{C_1}$.
It is clear from the definition that the contours satisfy the conditions~\eqref{z9-29-6}.

Recall that (see~\eqref{eq:wsca0}) $\width=2\sqrt{n}+2^{-2/3}n^{-1/6}\x$
where $\x\in\R$ is fixed.
We have

\begin{lm}\label{z12-7-1}
There exist $\epsilon>0$, $n_0\in \mathbb{N}$, and positive constants $c_1$ and $c_2$ such that
with the definition~\eqref{eq:sigmaa} of $\Sigma$ with this $\epsilon$, $\phi(z)$ defined in~\eqref{eq:phid} satisfies
\begin{equation}
  \begin{split}
   \textrm{Re } \phi(z)\le c_1n^{-1}, \qquad& z\in C_{1,\iin}\cup C_{2,\iin},\\
   \textrm{Re } \phi(z)\le -c_2n^{-3/4}, \qquad&z\in C_{1,\oou}\cup C_{2,\oou},
   \end{split}
\end{equation}
for all $n\ge n_0$.
\end{lm}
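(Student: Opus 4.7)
The plan is to reduce $\phi$ to a cubic model near $z=0$ using standard equilibrium theory, then verify both bounds by combining this local expansion with a global compactness argument for the horizontal tails.

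First I would compute $\phi'(z)$ explicitly. For the Hermite weight $e^{-nx^2}$, the equilibrium density is $\rho(x)=\frac{1}{\pi}\sqrt{2-x^2}$ on $[-\sqrt{2},\sqrt{2}]$, so $g'(z)=z-\sqrt{z^2-2}$ with the branch $\sqrt{z^2-2}\sim z$ at infinity. Hence in the upper half plane
\[
\phi'(z)=-\sqrt{z^2-2}+\frac{iM}{\sqrt{2n}}.
\]
Since $M=2\sqrt{n}+2^{-2/3}n^{-1/6}x$, one has $iM/\sqrt{2n}=i\sqrt{2}+O(n^{-2/3})$, and $\sqrt{z^2-2}=i\sqrt{2-z^2}$ in a fixed neighborhood of $0$, yielding $\phi'(z)=iz^2/(2\sqrt{2})+O(z^4)+O(n^{-2/3})$. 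The equilibrium condition $g_+(x)+g_-(x)=x^2-l$ on $(-\sqrt{2},\sqrt{2})$ forces $\mathrm{Re}\,\phi(0)=0$, so integrating along a path in the upper half plane gives the cubic expansion
\[
\phi(z)=\phi(0)+\frac{iz^3}{6\sqrt{2}}+O(|z|^5)+O(n^{-2/3}|z|),
\]
valid for $|z|$ less than a fixed small constant.

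I would then analyze each portion of the contour separately. On $C_{1,\iin}$, with $z=u+iv$, $v=n^{-1/3}+|u|/\sqrt{3}$, $|u|\le n^{-1/4}$, a direct calculation gives $\mathrm{Re}(iz^3)=v^3-3u^2v$; substituting the parametrization and differentiating in $|u|$ shows this quantity is maximized at some $|u_*|$ of order $n^{-1/3}$ with value of order $n^{-1}$, which together with the corrections yields the first bound. On the cubic portion of $C_{1,\oou}$ with $n^{-1/4}\le|u|\le\epsilon$, the contour approaches the steepest-descent ray at angle $\pi/6$ from the origin, along which $\mathrm{Re}(iz^3)=-|z|^3\sin(3\theta)$ with $\sin(3\theta)\ge c>0$, so $\mathrm{Re}\,\phi(z)\le-c|z|^3\le-cn^{-3/4}$ from the cubic dominating the corrections. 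On the horizontal portion ($|u|\ge\epsilon$, $\mathrm{Im}\,z=v_0$), I would invoke the global variational property of the $g$-function: $\mathrm{Re}\,\phi\le 0$ on the real axis with equality only on $[-\sqrt{2},\sqrt{2}]$, $\partial_v\mathrm{Re}\,\phi(u+i0)$ is strictly negative uniformly for $|u|\ge\epsilon$ (with an $O(n^{-2/3})$ correction), and the $-z^2/2$ term dominates for large $|u|$; a compactness argument then gives $\mathrm{Re}\,\phi(u+iv_0)\le-c_0$ for some fixed $c_0>0$, which is far stronger than $-c_2n^{-3/4}$.

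The main obstacle is the delicate balance on $C_{1,\iin}$: one has to perform the explicit algebraic computation showing that the maximum of $v^3-3u^2v$ with $v=n^{-1/3}+|u|/\sqrt{3}$ is only of order $n^{-1}$ (rather than of order $|z|^3\sim n^{-3/4}$, which would be the crude bound), and verify that the $O(n^{-2/3}|z|)$ correction arising from the $x$-dependent shift in $M$ does not spoil this bound (a priori this contribution could be as large as $O(n^{-11/12})$ since $|z|$ may reach $n^{-1/4}$; one needs either to absorb it into $c_1=c_1(x)$ or to combine it with a favorable sign against the cubic). The specific offset $n^{-1/3}$, which is the critical scale, and the angle $\pi/6$, which is steepest descent for the cubic $iz^3$, in the definition of $\Sigma$ are dictated exactly by this balance; any other choice would either fail the descent condition on $C_{1,\oou}$ or exceed the $n^{-1}$ bound on $C_{1,\iin}$.
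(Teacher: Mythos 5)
Your proposal is essentially the paper's proof. You arrive at the same cubic local model $\textrm{Re}\,\phi(u+iv)\approx \frac{1}{6\sqrt2}(v^3-3u^2v)-\mathrm{const}\cdot\frac{\x v}{n^{2/3}}+O(|z|^5)$ near the origin, and you use the contour $v=n^{-1/3}+|u|/\sqrt3$ to balance the cubic exactly as the paper does: on $C_{1,\iin}$ the maximum of $v^3-3u^2v$ along the contour is of order $n^{-1}$, and on the slanted part of $C_{1,\oou}$ the angle $\pi/6$ gives $\textrm{Re}(iz^3)\le -c|z|^3\le -cn^{-3/4}$. The paper organizes this by rescaling $t=|u|n^{1/3}$ and analyzing the cubic $f(t)=-a_1t^3-a_2t^2+a_3t+a_4$ with $a_1,a_2>0$ (concavity for $t>0$ gives the uniform upper bound, and $f(t)\le -ct^3$ for large $t$ gives the second bound), but this is the same computation you describe. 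Your worry about the $O(n^{-2/3}|z|)$ correction is resolved exactly the way you suspect: near $u=0$ it is $O(n^{-1})$ and is absorbed into $c_1=c_1(\x)$; at $|u|\sim n^{-1/4}$ it is $O(n^{-11/12})$ and is dominated by the $-cn^{-3/4}$ coming from the cubic (note that in the paper the linear-in-$v$ coefficient is kept exact rather than bounded, which is why the sign bookkeeping is transparent there).

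The one place where your route genuinely diverges is the horizontal tails $|u|\ge\epsilon$. You propose a compactness argument based on $\textrm{Re}\,\phi\le 0$ on $\R$ and negativity of $\partial_v\textrm{Re}\,\phi$ at $v=0^+$. As stated this has a gap: negativity of the normal derivative at the boundary does not by itself control $\textrm{Re}\,\phi$ at the fixed height $v_0\sim\epsilon$, so you would need to show $\partial_v\textrm{Re}\,\phi<0$ on the whole segment $0<v<v_0$ (or invoke harmonicity and the boundary behaviour more carefully), plus handle the $n$-dependence uniformly. The paper avoids all of this by differentiating \emph{along} the horizontal line: $\partial_u\textrm{Re}\,\phi(u+iv_0)=-\textrm{Re}\sqrt{(u+iv_0)^2-2}$, which is $<0$ for $u>0$ and $>0$ for $u<0$, so $\textrm{Re}\,\phi$ on each horizontal segment is maximized at its endpoint on $\Sigma$, where the $-c_2n^{-3/4}$ bound is already proved. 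This is cleaner and requires no new estimate. Both treatments then get the bounds on $C_2$ from the symmetry $\phi(\bar z)=\overline{\phi(z)}$.
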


\begin{proof}  
From the properties of $g(z)$ and $l$,
it is easy to show that
$g(z)-\frac12 z^2+\frac12 l=\int_{z}^{\sqrt{2}}\sqrt{s^2-2}ds$ for $z\in\C\setminus (-\infty, \sqrt{2}]$ (see e.g. (7.60) \cite{DeiftBook}).
Thus,
\begin{equation}\label{eq:phiint}
  	\phi(z) =\int_{z}^{\sqrt{2}}\sqrt{s^2-2}ds \pm  \frac{\ii\width }{\sqrt{2n}}z, \qquad z\in\C_{\pm}.
\end{equation}
This implies that for $\phi_{\pm}(u)$ is purely imaginary for $z=u\in (-\sqrt{2}, \sqrt{2})$ where $\phi_{\pm}$ denotes the boundary values from $\C_{\pm}$ respectively.
Hence for $u\in (-\sqrt{2}, \sqrt{2})$ and $v>0$, $\textrm{Re}\,\phi(u+iv)= \textrm{Re}\,(\phi(u+iv)-  \phi_+(u))$.
For $u^2+v^2$ small enough and $v>0$,
using the Taylor's series about $s=0$ and also~\eqref{eq:wsca0}, we have
\begin{equation}\label{eq:rephif}
  \begin{split}
  \textrm{Re}\,\phi(u+iv)
  &=-\textrm{Re}\left(\int_{u}^{u+\ii v}\sqrt{s^2-2}ds\right)-\frac{\width v}{\sqrt{2n}}\\
  &=-\frac{1}{2^{3/2}}\textrm{Im}\left(\int_{u}^{u+\ii v} (s^2 + O(s^4) )ds\right)-\frac{\x }{2^{7/6}n^{2/3}} v.
 \end{split}
\end{equation}
The integral involving $O(s^4)$ is $O(|u^2+v^2|^{5/2})$.
On the other hand,
\begin{equation}
\label{z3-20-1}
  -\frac{1}{2^{3/2}}\textrm{Im}\left(\int_{u}^{u+\ii v}s^2 ds\right)-\frac{\x v}{2^{7/6}n^{2/3}}
  =-\frac{1}{2^{2/3}3}(3u^2v-v^3)-\frac{\x v}{2^{7/6}n^{2/3}}.
\end{equation}
For $z=u+iv$ such that $v= n^{-1/3} + |u|/\sqrt{3}$ (see~\eqref{eq:sigmaa}),~\eqref{z3-20-1} equals
\begin{equation}
\label{z3-20-2}
  n^{-1}\left(-\frac{2^{7/3}}{3^{5/2}}t^3-\frac{2^{1/3}}{3}t^2+\frac{(2^{1/2}-\x)}{2^{7/6}3^{1/2}}t
  +\frac{1}{2^{7/6}3}(2^{1/2}-3\x)\right),
\end{equation}
by setting $t=|u|n^{1/3}$. 
The polynomial in $t$ is cubic
and is of form $f(t)=-a_1t^3-a_2 t^2+a_3t+a_4$ where $a_1, a_2>0$ and $a_3, a_4\in \R$. 
It is easy to check that this function is concave down for positive $t$.
Hence
\begin{enumerate}[(i)]
\item $\sup_{t\ge 0} f(t)$ is bounded above and
\item there are $c>0$ and $t_0>0$ such that $f(t)\le -ct^3$ for $t>t_0$.
\end{enumerate}
Note that for $z\in C_{1, \iin}$, $t\in [0, n^{1/12}]$. Using $(i)$, we find that~\eqref{z3-20-2} is bounded above by a constant time $n^{-1}$ for uniformly in $z\in C_{1, \iin}$.
Since the integral involving $O(s^4)$ in~\eqref{eq:rephif} is $O(n^{-5/4})$ when $z\in C_{1, \iin}$, we find that
there is a constant $c_1>0$ such that $\textrm{Re}\, \phi(z)\le c_1n^{-1}$ for $z\in C_{1, \iin}$.

Now, for $z=u+iv$ such that $v= n^{-1/3} + |u|/\sqrt{3}$ and $|u|\ge n^{-1/4}$, we have $t=|u|n^{1/3}\ge n^{1/12}$ and hence from (ii),
~\eqref{z3-20-2} is bounded above by $-ct^3n^{-1}=-c|u|^3$ for all large enough $n$.
On the other hand, for such $z$, the integral involving $O(s^4)$ in~\eqref{eq:rephif} is $O(|z|^5)= O(|u|^5)$. Hence $\textrm{Re}\, \phi(z)\le -c|u|^3 + O(|u|^5)$ for such $z$. Now if we take $\epsilon>0$ small enough, then there is $c_2>0$ such that $\textrm{Re}\, \phi(z)\le -c_2|u|^3$ for $|u|\le \epsilon$.
Combining this, we find that there exist $\epsilon>0$, $n_0\in \mathbb{N}$, and $c_2>0$ such that for $\Sigma$ with this $\epsilon$,
we have $\textrm{Re}\, \phi(z)\le -c_2|u|^3$ for $z=u+iv\in \Sigma\setminus C_{1, \iin}$. Since $|u|\ge n^{-1/4}$ for such $z$, we find  $\textrm{Re}\, \phi(z)\le -c_2 n^{-3/4}$ for  $z\in \Sigma\setminus C_{1, \iin}$.

We now fix $\epsilon$ as above and consider the horizontal part of $C_{1, \oou}$. Note that from~\eqref{eq:phiint}, for fixed $v_0>0$,
\begin{equation}
\begin{split}
   \frac{\partial}{ \partial u} \text{Re}\, \phi(u+iv_0) = \text{Re}\, \phi'(u+iv_0)
   = - \text{Re}\, \sqrt{(u+iv_0)^2-2}.
\end{split}
\end{equation}
It is straightforward to check that this is $<0$ for $u>0$ and $>0$ for $u<0$. Hence the value of $\text{Re}\, \phi(z)$ for $z$ on the horizontal part of $C_{1, \oou}$ is the largest at the end which are the intersection points of the horizontal segments and $\Sigma$. Since $\textrm{Re}\, \phi(z)\le -c_2 n^{-3/4}$ for  $z\in \Sigma\setminus C_{1, \iin}$, we find that the same bound holds for all $z$ on the horizontal segments of $C_{1, \oou}$. Therefore, we obtain $\textrm{Re } \phi(z)\le -c_2n^{-3/4}$ for all $z\in C_{1,\oou}$.

The estimates on $C_2$ follows from the estimates on $C_1$ due to the symmetry of $\phi$ about the real axis.
\end{proof}

Inserting the estimates in Lemma~\ref{z12-7-1} to the formula~\eqref{z12-13-1} and using the fact that
$f_j(z)$, $j=1,2$, and their derivatives are bounded on $C_1\cup C_2$ (see~\eqref{eq:f1f} and~\eqref{eq:f2f}), we find that
\begin{equation}\label{eq:oute}
	K_\s(z,w)\leq O(e^{-c_2n^{1/4}}), \qquad \text{if one of $z$ or $w$ is in $C_{1,\oou}\cup C_{2,\oou}$.}
\end{equation}

We now analyze the kernel $K_\s(z,w)$ when $z, w\in C_{1,\iin}\cup C_{2,\iin}$. We first scale the kernel.
Set
\begin{equation}\label{eq:hatK}
  \hat K_s(\xi,\eta):=2\pi i \cdot i2^{1/6} n^{-1/3}K_\s(\ii 2^{1/6}n^{-1/3}\xi,\ii 2^{1/6}n^{-1/3}\eta).
\end{equation}
We also set
\begin{equation}
\begin{split}
  \Sigma_1^{(n)}:=\left\{u+\ii v: \,  u=2^{-1/6}+ 3^{-1/2}|v|, \, -2^{-1/6}n^{1/12}\le v\le 2^{-1/6}n^{1/12}\right\}.
\end{split}
\end{equation}
This contour is oriented from top to bottom.
Note that if $\zeta\in \Sigma_1^{(n)}$, then
\begin{equation}
\label{z12-13-2}
  	z=\ii 2^{1/6}n^{-1/3}\zeta\in C_{1,\iin}.
\end{equation}
We also set
$\Sigma_2^{(n)}=\{-\xi:\xi\in\Sigma_1^{(n)}\}$ with the orientation from top to bottom.
Then
\begin{equation}\label{eq:Kssca}
	\det(1+K_s)_{L^2(C_{1,in}\cup C_{2, in}, dz)}= \det(1+\hat K_s)_{L^2(\Sigma_1^{(n)}\cup \Sigma_2^{(n)}, \frac{d\zeta}{2\pi\ii})}.
\end{equation}
From~\eqref{eq:phiint},  
\begin{equation}
  \phi(z)=\begin{cases}
    \frac{\pi\ii}{2}+\left(\frac{\width-2\sqrt{n}}{\sqrt{2n}}\right)\ii z+
  2^{-3/2}3^{-1}\ii z^3+O(z^5),\quad & z\in C_{1,\iin},\\
  -\frac{\pi\ii}{2}-\left(\frac{\width-2\sqrt{n}}{\sqrt{2n}}\right)\ii z-
  2^{-3/2}3^{-1}\ii z^3+O(z^5),\quad & z\in C_{2,\iin}.
  \end{cases}
\end{equation}
This implies that, using~\eqref{eq:wsca0} and $|z|=O(n^{-1/4})$ for $z\in C_{1, \iin}\cup C_{2, \iin}$,
\begin{equation}\label{eq:phize}
  n\phi(\ii 2^{1/6}n^{-1/3}\zeta)=
  \begin{cases}
    \frac{n\pi\ii}{2}+m_\x(\zeta)+O(n^{-1/4}),\qquad & \zeta \in\Sigma_1^{(n)},\\
    -\frac{n\pi\ii}{2}-m_\x(\zeta)+O(n^{-1/4}),\qquad & \zeta \in\Sigma_2^{(n)},
  \end{cases}
\end{equation}
where
\begin{equation}
\label{z3-26-1}
  m_\x(\zeta):=-\frac{1}{2}\x\zeta+\frac{1}{6}\zeta^3, \qquad \zeta\in \C.
\end{equation}
It is also easy to check from the definition~\eqref{eq:betade} that  
\begin{equation}\label{eq:betaas}
  \beta(\ii 2^{\frac{1}{6}}n^{-\frac{1}{3}}\zeta)=
  \begin{cases}
    e^{\frac{\ii\pi}{4}}\left(1-\ii 2^{-\frac{4}{3}} n^{-\frac{1}{3}} \zeta+O(n^{-\frac{1}{2}})\right),\qquad &\zeta\in\Sigma_{1}^{(n)},\\
    e^{\frac{-\ii\pi}{4}}\left(1-\ii 2^{-\frac{4}{3}} n^{-\frac{1}{3}}\zeta +O(n^{-\frac{1}{2}})\right),\qquad &\zeta\in\Sigma_{2}^{(n)}. \end{cases}
\end{equation}
Using these we now evaluate~\eqref{eq:hatK}. Set
\begin{equation}
	z=\ii 2^{1/6}n^{-1/3}\xi, \qquad w=\ii 2^{1/6}n^{-1/3}\eta. 
\end{equation}
We consider two cases separately:
(a) $z,w\in C_{1,\iin}$ or $z,w\in C_{2,\iin}$,
and (b) $z\in C_{1,\iin}, w\in C_{2,\iin}$, or $z\in C_{2,\iin}, w\in C_{1,\iin}$.
From~\eqref{eq:betaas}, 
\begin{equation}\label{eq:casevad}
\begin{split}
	\frac{\beta(z)-\beta(w)}{z-w}= O(1)\qquad \text{for case (a),}
\end{split}
\end{equation}
and
\begin{equation}\label{eq:casevbediff}
\begin{split}
 	\frac{\beta(z)-\beta(w)}{z-w}= \pm n^{1/3} \frac{2^{5/6} \sin\frac{\pi}{4}}{\xi-\eta}(1+O(n^{-\frac{1}{4}}))
	\quad \text{for case (b).}
\end{split}
\end{equation}
Here the sign is $+$ when $z\in C_{1,\iin}, w\in C_{2,\iin}$ and $-$ when $z\in C_{2,\iin}, w\in C_{1,\iin}$.
We also note that using~\eqref{eq:betaas}, for $z\in C_{1, \iin}\cup C_{2, \iin}$
the asymptotic formula~\eqref{eq:f2f} 
can be expressed as
\begin{equation}\label{eq:f2f2}
  f_2(z)=\begin{cases}
           e^{\ii\s\pi} \frac{\beta(z)-\beta(z)^{-1}}{-2\ii} \left(1+ O(n^{-5/12}) \right),\qquad& z\in C_{1,\iin},\\
           e^{-\ii\s\pi} \frac{\beta(z)-\beta(z)^{-1}}{-2\ii} \left(1+ O(n^{-5/12}) \right),\qquad& z\in C_{2, \iin}.
         \end{cases}
\end{equation}
Thus,~\eqref{eq:f1f},~\eqref{eq:betaas}, and~\eqref{eq:casevbediff}, implies that
for case (b),
\begin{equation}
\begin{split}
 \frac{f_1(z)f_2(w)-f_1(z)f_2(w)}{-2\pi\ii(z-w)}
=&-(\beta(z)^{-1}+\beta(w)^{-1})\frac{\beta(z)-\beta(w)}{4\pi(z-w)} \left(1+ O(n^{-5/12}) \right)\\
=&\mp n^{1/3} \frac{\cos(\frac{\pi}{4})\sin(\frac{\pi}{4})}{2^{1/6} \pi (\xi-\eta)}(1+O(n^{-\frac{1}{4}})).
\end{split}
\end{equation}
Inserting this and~\eqref{eq:phize} into~\eqref{z12-13-1} (recall~\eqref{eq:hatK}), we find that
\begin{equation}\label{eq:leadK}
  \hat K_s(\xi,\eta)=\pm\frac{e^{\pm(m_\x(\xi)-m_\x(\eta))}}{\xi-\eta}(1+O(n^{-1/4})),
\end{equation}
for case (b). A similar calculation using~\eqref{eq:casevad} instead of~\eqref{eq:casevbediff} implies that $\hat K_s(\xi,\eta)= O(n^{-1/3})$ for case (a).

The above calculations imply that 
$\hat K_s$ converges to the operator given by the leading term in~\eqref{eq:leadK} or $0$ depending on whether $\xi$ and $\eta$ are on different limiting contours or on the same limiting contours. From this structure, we find that $\hat K_s$ converges to
$\big( \begin{smallmatrix} 0 & K_{12}^{(\infty)} \\  K_{21}^{(\infty)} & 0\end{smallmatrix}\big)$ on $L^2(\Sigma^{(\infty)}_1, \frac{d\zeta}{2\pi i})\oplus L^2(\Sigma^{(\infty)}_2, \frac{d\zeta}{2\pi i})$ in the sense of pointwise limit of the kernel
where
\begin{equation}\label{eq:leadK}
  K_{12}^{(\infty)}(\xi, \eta)=\frac{e^{m_\x(\xi)-m_\x(\eta)}}{\xi-\eta},
  \quad
  K_{21}^{(\infty)}(\xi, \eta)=-\frac{e^{-(m_\x(\xi)-m_\x(\eta))}}{\xi-\eta},
\end{equation}
and  $\Sigma^{(\infty)}_1$ is a simple contour from $e^{\ii\pi/3}\infty$ to $e^{-\ii\pi/3}\infty$ staying in the right half plane, and $\Sigma^{(\infty)}_2=-\Sigma^{(\infty)}_1$ from $e^{2\pi\ii/3}\infty$ to $e^{-2\pi\ii/3}\infty$.
Note that the limiting kernel does not depend on $s$.

In order to ensure that the Fredholm determinant also converges to the Fredholm determinant of the limiting operator, we need additional estimates for the derivatives to establish the convergence in trace norm.
It is not difficult to check that the formal derivatives of the limiting operators indeed yields the correct limits of the derivatives of the kernel.
We do not provide the details of these estimates since the arguments are similar and the calculation follows the standard argument. Then we obtain
\begin{equation}\label{eq:Khatli}
 	\lim_{n\rightarrow\infty}\det\left.\big(1+\hat K_s\big)\right|_{L^2(\Sigma_{1}^{(n)}\cup\Sigma_2^{(n)},\frac{d\zeta}{2\pi\ii})}
=\det\left.\big(1- K^{(\infty)}_\x\big)\right|_{L^2(\Sigma_1^{(\infty)},\frac{d\zeta}{2\pi\ii})} ,
\end{equation}
where $K^{(\infty)}_\x= K_{12}^{(\infty)} K_{21}^{(\infty)}$ of which the kernel  is
\begin{equation}
\label{z3-25-2}
  K^{(\infty)}_\x(\xi,\eta):=e^{m_\x(\xi)+m_\x(\eta)}\int_{\Sigma^{(\infty)}_2}
\frac{e^{-2m_\x(\zeta)}}{(\xi-\zeta)(\eta-\zeta)}\frac{d\zeta}{2\pi\ii}.
\end{equation}
The determinant $\det(1- K^{(\infty)}_\x)$ equals the Fredholm determinant of the Airy operator. Indeed, this determinant is a conjugated version of the determinant in the paper  \cite{Tracy10asymptotics} on ASEP. If we call the operator in (33) of \cite{Tracy10asymptotics} $L_s(\eta, \eta')$,
then $K^{(\infty)}_\x(\xi,\eta)= e^{m_\x(\xi)} L_x(\xi, \eta) e^{-m_\x(\eta)}$. It was shown in page 153 in \cite{Tracy10asymptotics} that $\det(1+L_s)=\det(1-K_{Airy})_{(s,\infty)}=F(s)$.

Now, since $\lim_{n\to \infty} P_s(M)=\lim_{n\to \infty} \det(1+K)_{L^2(C_+\cup C_-)}$ by~\eqref{eq:oute}, ~\eqref{eq:Kssca} and~\eqref{eq:Khatli} implies that $P_\s(2\sqrt{n}+2^{-\frac{2}{3}}n^{-\frac{1}{6}}\x)\rightarrow F(\x)$ for all $s$.
All the estimates are uniform in $\s\in [0,1]$ and we obtain
$\Prob\left(\widthf_n<  2\sqrt{n}+2^{-2/3}n^{-1/6}\x\right)= \int_0^1 P_s(M)ds\to F(x)$.
This proves Theorem \ref{thm:1}.

\section{Symmetric simple random walks}\label{sec:SRW}


\subsection{Continuous-time symmetric simple random walks}\label{sec:SRW1}

Let $Y(t)$ be a continuous-time symmetric simple random walk.
This can also be thought of as the difference of two independent rate $1/2$ Poisson processes. The transition probability is given by $p_t(x,y)=p_t(y-x)$ where
\begin{equation}\label{eq:ctstr}
  p_t(k)=e^{-t}\sum_{n\in\Z}\frac{(t/2)^{2n+k}}{n!(n+k)!},\qquad k\in\Z.
\end{equation}
where $\frac1{k!}:=0$ for $k<0$ by definition.
Let $Y_i(t)$ be independent copies of $Y$ and set $X_i(t)=Y_i(t)+i$, $i=0,1,2,\cdots,n-1$.
Also set
$X(t):=(X_0(t),X_1(t),\cdots,X_{n-1}(t))$.
Then $X(0)= \left(0,1,\cdots,n-1\right)$.
We condition on the event that (a) $X(T)=X(0)$ and
(b) 
$X_0(t)<X_1(t)<\cdots< X_{n-1}(t)$ for all $t\in [0,T]$.
See, for example, \cite{AdlerFvM}.
We use the notation $\Prob$ to denote this conditional probability.

Define the `width' as
\begin{equation}
  	W_n(T) = \sup_{t\in [0,T]} (X_{n-1}(t)-X_1(t)).
\end{equation}
The analogue of Proposition~\ref{prop:BMHank} is the following. The proof is given at the end of this section.

\begin{prop}\label{prop:contiSRW}
For  non-intersecting continuous-time symmetric simple random walks,
\begin{equation}
\label{z3-27-3}
  	\Prob(\widthf_n(T)<\width)= \frac1{\ToC_n(f)} \oint_{|\s|=1}
	\To_n(f, \nodes_s)
	\frac{d\s}{2\pi\ii\s}, \quad f(z)=e^{\frac{T}2 (z+z^{-1})},
\end{equation}
and $\nodes_s=\{z\in\C : z^M=s\}$.
\end{prop}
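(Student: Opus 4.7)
The plan is to mirror the proof of Proposition~\ref{prop:BMHank}, replacing the Gaussian transition density by the continuous-time SSRW transition probability and replacing the Poisson summation step by a contour-integral trick that enforces the linear constraint on the lift. The key analytic input is the Bessel integral representation
\begin{equation*}
  p_T(k) = e^{-T}\oint_{|z|=1} z^{-k-1} f(z)\,\frac{dz}{2\pi\ii}, \qquad f(z)= e^{\frac{T}{2}(z+z^{-1})},
\end{equation*}
which follows directly from $p_T(k)=e^{-T}I_{|k|}(T)$ and the standard generating function of modified Bessel functions.

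First, I would set up the analogues of the two Karlin–McGregor formulas from Section~\ref{sec:HBMfo}. The ordinary Karlin–McGregor identity gives the denominator as $\det[p_T(j-k)]_{j,k=0}^{n-1}=e^{-nT}\ToC_n(f)$. The chamber version, a direct discrete counterpart of Lemma~\ref{z10-31-1}, gives
\begin{equation*}
  \textrm{numerator} \; \propto \; \sum_{h\in\Z^n:\,\sum h_j=0}\det\bigl[p_T(j-k+h_k \width)\bigr]_{j,k=0}^{n-1}.
\end{equation*}
The lift-and-reflection involution behind Lemma~\ref{z10-31-1} only relies on two kinds of boundary events (neighboring walks colliding, and the top-minus-bottom gap reaching $\width$), both of which are well defined for càdlàg integer-valued paths. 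This is the step that requires the most care but is otherwise standard.

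Next, to evaluate the constrained sum I would use $\mathbb{1}_{N=0}=\oint_{|s|=1}s^N\frac{ds}{2\pi\ii s}$ with $N=\sum_k h_k$, absorb $s^{\sum h_k}=\prod_k s^{h_k}$ into the columns of the determinant, and invoke multilinearity to reduce the calculation to evaluating, for each column $k$,
\begin{equation*}
  \sum_{h\in\Z} s^{h}\,p_T(\ell+h\width), \qquad \ell=j-k.
\end{equation*}
The super-exponential decay of $p_T(k)$ in $|k|$ makes all the interchanges of sum and integral legitimate. The decisive identity I need is
\begin{equation*}
  \sum_{h\in\Z} s^{h}\,p_T(\ell+h\width) \;=\; \frac{e^{-T}}{\width}\sum_{z\in\nodes_s} z^{-\ell} f(z),
\end{equation*}
which I would obtain by substituting the Bessel integral representation, deforming the unit circle into the two circles $|z|=1\pm\epsilon$, summing the resulting geometric series $\sum_{h\geq 0}(sz^{-\width})^h$ and $\sum_{h\geq 1}(s^{-1}z^\width)^h$, and collecting the residues at the $\width$ roots of $z^\width=s$. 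This is exactly the $v(z)$ mechanism of Theorem~\ref{thm:0} with $\gamma(z)=z^\width-s$.

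Plugging the identity back in, multilinearity immediately assembles $\To_n(f,\nodes_s)$ with its correct normalization $|\nodes_s|=\width$, the factors $e^{-nT}$ cancel between numerator and denominator, and one arrives at~\eqref{z3-27-3}. The main obstacle is the Karlin–McGregor reflection in the chamber $D_n(\width)$ for the discrete-step setting; every other step is an essentially mechanical contour-integral computation.
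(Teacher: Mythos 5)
Your proposal follows the paper's proof essentially step for step: the chamber Karlin--McGregor identity giving the lifted sum, the contour-integral indicator enforcing $\sum_k h_k=0$ absorbed column-by-column, the Bessel integral representation of $p_T$, and the resummation $\sum_{h} s^h p_T(\ell+h\width)=\frac{e^{-T}}{\width}\sum_{z^\width=s}z^{-\ell}f(z)$ (which the paper simply states and you correctly derive by the two-circle deformation and residue computation). The only genuine addition is your explicit remark on why the reflection involution extends to c\`adl\`ag integer paths, which the paper elides with ``similarly to Lemma~\ref{z10-31-1}.''
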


The limit theorem is:

\begin{thm}
\label{z12-20-1}
For each $\x\in\R$,
\begin{equation}
    \lim_{\min\{n, T\}\to \infty} \Prob \left( \frac{\widthf_n(T) -\mu(n,T) }{\sigma(n,T)} \le \x \right) = F(\x)
\end{equation}
where
\begin{equation}\label{eq:msca}
    \mu(n,T):= \begin{cases}
    2\sqrt{nT}, \qquad & n<T,\\
    n+T, \qquad &n\ge T,
    \end{cases}
\end{equation}
and
\begin{equation}\label{eq:msca2}
    \sigma(n,T):= \begin{cases}
    2^{-2/3}T^{1/3}\big(  \sqrt{\frac{n}{T}}+\sqrt{\frac{T}{n}}  \big)^{1/3}, \quad & n<T,\\
    2^{-1/3}T^{1/3}, \quad &n\ge T.
    \end{cases}
\end{equation}
\end{thm}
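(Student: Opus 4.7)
The plan is to follow the template of the proof of Theorem~\ref{thm:1} in Section~\ref{sec:pfTh12}, but now starting from the Toeplitz formula of Proposition~\ref{prop:contiSRW} and using Theorem~\ref{thm:0}. With $\nodes_s=\{z:z^M=s\}$ one takes $\gamma(z)=z^M-s$, so that $z\gamma'(z)/(M\gamma(z))=z^M/(z^M-s)$ and hence
\[
  \Prob(W_n(T)<M) = \oint_{|s|=1} \det\bigl(1+K_s\bigr)_{L^2(\Sigma_{in}\cup\Sigma_{out},\frac{dz}{2\pi iz})} \frac{ds}{2\pi is},
\]
with $v_s(z)=-z^M/(z^M-s)$ on $\Sigma_{in}$ and $v_s(z)=s/(z^M-s)$ on $\Sigma_{out}$. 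Once $\Sigma_{in}$ and $\Sigma_{out}$ are deformed off the unit circle by a distance bounded away from zero, $v_s$ decays exponentially in $M$ uniformly in $|s|=1$, which will eventually allow the $s$-integral to be trivialized.

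The continuous orthogonal polynomials entering $K_s$ are those for the weight $e^{T\cos\theta}\frac{d\theta}{2\pi}$ on the unit circle, namely exactly the Baik--Deift--Johansson polynomials associated with the Poissonized longest increasing subsequence problem, whose Deift--Zhou Riemann--Hilbert asymptotics are well developed. Two regimes appear, reflecting a phase transition at $n=T$: for $n<T$ the equilibrium measure of the relevant $g$-function is supported on a proper arc of the unit circle, whereas for $n\ge T$ it fills the entire circle. In each case the phase to be analyzed is schematically $\phi(z)=g(z)-\frac{T}{2n}(z+z^{-1})+\frac{M}{n}\log z+\text{const}$, and the scalings $\mu(n,T)$, $\sigma(n,T)$ in~\eqref{eq:msca}--\eqref{eq:msca2} are dictated by the location of and the cubic expansion around the saddle point of $\phi$ in each regime.

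With these Riemann--Hilbert asymptotics in hand, the remaining steps mirror Section~\ref{sec:pfTh12}: deform $\Sigma_{in}$ and $\Sigma_{out}$ into steepest-descent contours $C_1,C_2$ passing through the saddle, split each into an ``inner'' part of diameter $O(n^{-1/4})$ and an ``outer'' part, establish an analogue of Lemma~\ref{z12-7-1} showing $\mathrm{Re}\,\phi\le -cn^{-3/4}$ on the outer parts (so that their contribution to $K_s$ is exponentially small) and $\mathrm{Re}\,\phi\le Cn^{-1}$ on the inner parts, rescale the inner parts by $\sigma(n,T)$, and verify pointwise convergence of the rescaled kernel to the Airy-type block operator with $K_{12}^{(\infty)},K_{21}^{(\infty)}$ as in~\eqref{eq:leadK}. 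Trace-norm control via derivative estimates on the rescaled contours then yields convergence of the Fredholm determinants uniformly in $|s|=1$; since the limiting kernel is independent of $s$, the $s$-integral contributes $1$ and the stated $F(x)$ limit follows.

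The main technical obstacle is the RHP analysis for $n\ge T$, where the standard BDJ $g$-function no longer applies and a new one adapted to the full-circle equilibrium measure and to the extra $\frac{M}{n}\log z$ term must be constructed. The transition region $n\approx T$ requires particular care, since a different local scaling smoothly interpolating between the two cases is needed there. By contrast, uniformity in $s$ is relatively benign: because $|z^M-s|$ stays uniformly bounded away from zero on the deformed contours, all estimates go through uniformly in $|s|=1$.
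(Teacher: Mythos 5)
Your proposal follows the paper's own outline essentially verbatim: combine Proposition~\ref{prop:contiSRW} with Theorem~\ref{thm:0} for the BDJ weight $e^{(T/2)(z+z^{-1})}$, run the steepest-descent contour deformation near $z=1$ as in Section~\ref{sec:pfTh12}, and observe that the $s$-integral trivializes because the limiting kernel is $s$-independent; the paper itself only sketches these steps and displays the leading-order computation just in the regime $n\ge T+C_1T^{1/3}$. One slip worth flagging: your identification of $n\ge T$ as the technically harder regime is backwards---for $n\ge T$ the equilibrium measure fills the whole circle, the $g$-function is trivial, and the Szeg{\H o}-type asymptotics $\kappa_n^{-1}p_n(z)\approx z^n e^{-T/(2z)}$ suffice (this is exactly the case the paper works out), whereas the non-trivial BDJ $g$-function is required in the arc regime $n<T$; also, the steepest-descent contours must come within $O(T^{-1/3})$ of the unit circle near the saddle $z=1$, so $v_s$ is not uniformly exponentially small there, and it is the $s$-independence of the limiting kernel, as at the end of Section~\ref{sec:pfTh12}, that actually removes the $s$-dependence.
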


Note that due to the initial condition and the fact that at most one of $X_j$'s moves with probability $1$ at any given time, if $X_i$ is to move downward at time $t$, it is necessary that $X_0, \cdots, X_{i-1}$ should have moved downward at least once during the time interval $[0,t)$.
Thus, if $T$ is small compared to $n$, then only a few bottom walkers can move downard (and similarly, only a few top walkers can move upward), and hence the middle walkers are `frozen'(See Figure~\ref{fig:Frozen region}). 
On the other hand, if $T$ is large compared to $n$, then there is no frozen region. 
The above result shows that the transition occurs when $T=n$ at which point the scalings~\eqref{eq:msca} and~\eqref{eq:msca2} change.

\begin{figure}[htbp]
  \centering
  \includegraphics[scale=0.65]{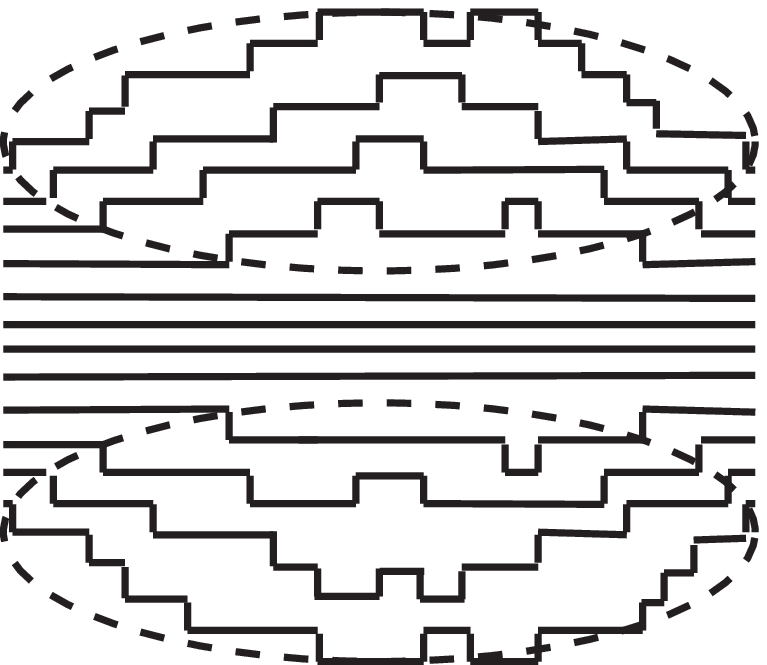}
\caption{Frozen region when $T<n$}\label{fig:Frozen region}
\end{figure}

Using Theorem~\ref{thm:0}, Theorem~\ref{z12-20-1} can be obtained following the similar analysis as in Section~\ref{sec:pfTh12} once we have the asymptotics of the (continuous) orthonormal polynomials with respect to the measure $e^{\frac{T}2(z+z^{-1})}\frac{dz}{2\pi iz}$ on the unit circle.
The asymptotics of these particular orthonormal polynomials were studied in \cite{Baik99} and \cite{Baik08} using the Deift-Zhou steepest-descent analysis of Riemann-Hilbert problems.
In order to be able to control the operator~\eqref{eq:KforToe}, the estimates on the error terms in the asymptotics need to be improved.
It is not difficult to achieve such estimates by keeping track of the error terms more carefully in the analysis of  \cite{Baik99} and \cite{Baik08}. We do not provide any details.
Instead we only comment that the difference of the scalings for $n<T$ and $n>T$ is natural from the Riemann-Hilbert analysis of the orthonormal polynomials. If we consider the orthonormal polynomial of degree $n$, $p_n(z)$, with weight $e^{\frac{T}2 (z+z^{-1})}$, the support of the equilibrium measure changes from the full circle when $\frac{n}{T}>1$ to an arc when $\frac{n}{T}<1$. The ``gap" in the support starts to appear at the point $z=-1$ when $n=T$ and grows as $\frac{n}{T}$ decreases. This results in different asymptotic formulas of the orthonormal polynomials in two different regimes of parameters.
However, we point out that the main contribution to the kernel~\eqref{eq:KforToe} turns out to come from the other point on the circle, namely $z=1$.

For technical reasons, the Riemann-Hilbert analysis is done separately for the following four overlapping regimes of the parameters: (\uppercase\expandafter{\romannumeral 1}) $n\ge T+C_1T^{1/3}$, (\uppercase\expandafter{\romannumeral 2}) $T-C_2T^{1/3}\le n\le T+C_3T^{1/3}$, (\uppercase\expandafter{\romannumeral 3}) $c_1T\le n\le T-C_4T^{1/3}$, (\uppercase\expandafter{\romannumeral 4}) $n\le c_2T$ where $0<c_k<1$ and $C_k>0$.

Here we only indicate how the leading order calculation leads to the GUE Tracy-Widom distribution for the case (I). 
We take
\begin{equation}\label{eq:4M}
	M= n+T + 2^{-1/3}T^{1/3} x.
\end{equation}
Let $p_n(z)$ be the orthonormal polynomial and $\kappa_n$ be its leading coefficient.
For case (I), the Riemann-Hilbert analysis implies that
\begin{equation}
    \kappa_n^{-1} p_n(z)\approx \begin{cases}
      z^ne^{-\frac{T}{2}z^{-1}} , \qquad &|z|>1,\\
      o(e^{-\frac{T}{2}z}) , \qquad & |z|<1,
    \end{cases}
\end{equation}
and
\begin{equation}
 	\kappa_n p_n^*(z)\approx \begin{cases}
      o(z^ne^{-\frac{T}{2}z^{-1}}) , \qquad &|z|>1,\\
     	e^{-\frac{T}{2}z} , \qquad & |z|<1.
    \end{cases}
\end{equation}
Here these asymptotics can be made uniform for $|z-1|\ge O(T^{-1/3})$. In the below, we always assume that $z$ and $w$ satisfy this condition even if we do not state it explicitly. 
The above estimates imply that the leading order of 
$z^{n/2}K_{conti}w^{-n/2}$, where $K_{conti}$ is defined in~\eqref{eq:Kconti}, becomes
\begin{equation}
 	z^{n/2}K_{conti}(z,w)w^{-n/2} \approx \begin{cases}
      	z^{n/2}\frac{e^{-\frac{T}2(z^{-1}+w)}}{1-z^{-1}w}w^{-n/2}, \quad &|z|>1, \, |w|<1,\\
     	-z^{-n/2}\frac{e^{-\frac{T}2(z+w^{-1})}}{1-z^{-1}w}w^{n/2}, \quad & |z|<1, \,  |w|>1.
    \end{cases}
\end{equation}
The kernel is of smaller order than the above when $|z|<1, |w|<1$ or $|z|>1, |w|>1$.
Since $\nodes=\nodes_s=\{z\in \C: z^M=s\}$, we choose $\gamma(z)=z^M-s$ and
\begin{equation}
  	v(z):=\begin{cases}
    	\frac{s}{z^M-s}\approx sz^{-M},\quad &|z|>1,\\
    	\frac{z^M}{s-z^M}\approx \frac{1}{s}z^M ,\quad &|z|<1.
  \end{cases}
\end{equation}
Here again the approximation is uniform for $|z-1|\ge O(T^{-1/3})$.
Hence inserting $f(z)=e^{\frac{T}2(z+z^{-1})}$, we find that the leading order term of~\eqref{eq:KforToe} is
\begin{equation}\label{eq:4Kz}
  	z^{n/2}K(z,w)w^{-n/2}
	\approx
	\pm \frac{e^{\pm (\phi(z)-\phi(w)})}{1-z^{-1}w},
	\quad \phi(z):= \frac{T}4 (z-z^{-1})- \frac{M-n}2 \log z
\end{equation}
where the sign is $+$ is  when $|z|>1, |w|<1$ and is $-$ when $|z|<1, |w|>1$. Using~\eqref{eq:4M}, we note that
\begin{equation}
  	\phi(z)
	= -\frac{T^{1/3}}{2^{4/3}} x(z-1) + \frac{T}{12}(z-1)^3
	+ O(T^{1/3}(z-1)^2)+O(T(z-1)^4).
\end{equation}
Hence for $\zeta=O(1)$,
\begin{equation}
  	\phi(1+ \frac{2^{1/3}}{T^{1/3}} \zeta)
	= -\frac12 x \zeta + \frac16 \zeta^3 + O(T^{-1/3}).
\end{equation}
After the scaling $z=1+ \frac{2^{1/3}}{T^{1/3}} \zeta$ and $w= 1+ \frac{2^{1/3}}{T^{1/3}} \eta$,~\eqref{eq:4Kz} converges to the leading term of~\eqref{eq:leadK}, except for the overall sign change which is due to the reverse orientation of the contour.
Thus we end up with the same limit~\eqref{eq:Khatli} which is $F(x)$.

\begin{proof}[Proof of Proposition~\ref{prop:contiSRW}]
Similarly to Lemma~\ref{z10-31-1} we apply the Karlin-McGregor argument in the chamber $\{x_0<x_1<\cdots<x_{n-1}<x_0+\width\}$ and obtain that
\begin{equation}
\label{z11-5-1}
  \Prob\left(\widthf_n(T)<\width\right)=\frac{  \sum_{h_j\in \Z, h_0+h_1+\cdots+h_{n-1}=0}
  \det\left[ p_T(x_j-y_k+h_k\width)\right]_{j,k=0}^{n-1}}{\det\left[p_T(x_j-y_k)\right]_{j,k=0}^{n-1}}.
  \end{equation}
  Note that numerator equals
  \begin{equation}
    \oint_{|s|=1}\det\left[\sum_{h\in\Z}p_T(x_j-y_k+h\width)s^h\right]_{j,k=0}^{n-1}\frac{ds}{2\pi is}.
  \end{equation}
  Since~\eqref{eq:ctstr} can be written as
  \begin{equation}
  p_T(x)=e^{-T}\oint_{|z|=1}z^{-x}e^{\frac{T}{2}(z+z^{-1})}\frac{dz}{2\pi iz},
  \end{equation}
  we find that 
  \begin{equation}
    \sum_{h\in\Z}p_T(x+h\width)s^h=\frac{e^{-T}}{\width}\sum_{z^\width=s}z^{-x}e^{\frac{T}{2}(z+z^{-1})}.
  \end{equation}
  Proposition~\ref{prop:contiSRW} follows immediately.
\end{proof}


\begin{rmk}\label{rmk:sec4}
If we were to evaluate the ratio $\frac{\To_n(f, \nodes_s)}{\ToC_n(f)}$ directly instead of using the Fredholm determinant formula, we need to find the asymptotic expansion of
the log of the determinants to the order $o(1)$ including the constant term.
This is relatively easy to obtain for $\ToC_n(f)$ when $\frac{T}{n}<1$: the Szeg\"o limit theorem essentially applies with an exponentially decaying error term.
However, when $\frac{T}{n}>1$, this calculation is cumbersome and complicated \cite{Baik99}, and the asymptotic expansions had not been obtained to the desired order .
Especially, the determination of the constant term in the asymptotic expansion would require some sophisticated analysis  (see e.g. \cite{DIK, Baik08}). 
The difficulty is due to the following fact that the orthogonal polynomials only give the asymptotics of the ratio $\ToC_k(f)/\ToC_{k-1}(f)$, whose error terms are of exponential type when $\frac{T}{n}<1$ but are of polynomial type when $\frac{T}{n}>1$.
This technicality is also directly related to the difficulty in obtaining the precise asymptotic in the lower tail regime for the length of the longest increasing subsequences or other directed last passage percolation models
\cite{Baik99, BDMPZ}.
For $f$ above, it turns out that the discrete Toeplitz determinant $\To_n(f, \nodes_m)$
essentially factors into two parts asymptotically, one of which is same as the asymptotic of the continuous Toeplitz determinant \cite{BaikJenkins}.
The formula~\eqref{z3-17-1} is precisely of the form that this cancellation is already taken into account.
By this reason, we could evaluate the limit of $\frac{\To_n(f, \nodes_s)}{\ToC_n(f)}$ for certain $m$ even if we do not have the asymptotic formula of each determinant to the order $o(1)$.
We note that the asymptotic evaluation of the Fredholm determinant may become difficult for other choices of $m$, especially for those which correspond to the so-called `saturated region' conditions for the discrete orthogonal polynomials.
\end{rmk}

\subsection{Discrete-time symmetric simple random walks}

Let $X_0(k), \cdots, X_{n-1}(k)$, $k=0,1,\cdots, n-1$, be independent  discrete-time symmetric simple random walks. Set
$X(k):=(X_0(k),X_1(k),\cdots,X_{n-1}(k))$.
We take the initial condition as
\begin{equation}
    X(0)=(0,2,\cdots,2n-2).
\end{equation}
and consider the process conditional of the event that (a) $X(2T)=X(0)$ and
(b) $X_0(k)<X_1(k)<\cdots< X_{n-1}(k)$ for all $k=0,1, \cdots, 2T$.
The non-intersecting discrete-time simple random walks can also be interpreted as random tiling of a hexagon and were studied in many papers. See, for example, \cite{CohnLarsenPropp98,Johansson03,BKMMbook,BorodinGorin09}.
The notation $\Prob$ denotes this conditional probability.
Define the width $\widthf_n(2T):= \max_{k=0,1,\cdots, 2T} \big( X_{n-1}(k)- X_0(k) \big)$ as before.

\begin{prop}\label{prop:SRW}
For  non-intersecting discrete-time symmetric simple random walks,
\begin{equation}\label{z3-27-2}
  	\Prob(\widthf_n(2T)<2\width)= \frac1{\ToC_n(f)} \oint_{|\s|=1}
	\To_n(f, \nodes_s)
	\frac{d\s}{2\pi\ii\s}, \quad f(z)=z^{-T}(1+z)^{2T},
\end{equation}
and $\nodes_s=\{z\in\C : z^M=s\}$.
\end{prop}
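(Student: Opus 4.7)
The plan is to follow essentially the same route used to prove Proposition~\ref{prop:contiSRW}, only replacing the continuous-time Poisson-difference transition kernel with the discrete-time binomial one. I begin by applying the Karlin-McGregor reflection argument in the chamber $\{x_0<x_1<\cdots<x_{n-1}<x_0+2\width\}$ (exactly as in Lemma~\ref{z10-31-1}) to obtain
\begin{equation*}
\Prob(\widthf_n(2T)<2\width)=\frac{\sum_{h\in \Z^n,\,h_0+\cdots+h_{n-1}=0}\det\bigl[p_{2T}(x_j-y_k+2\width h_k)\bigr]_{j,k=0}^{n-1}}{\det\bigl[p_{2T}(x_j-y_k)\bigr]_{j,k=0}^{n-1}},
\end{equation*}
where $x_j=y_j=2j$ and $p_{2T}$ is the $2T$-step transition of the simple random walk. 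As in the continuous-time proof, the constraint $\sum h_k=0$ is encoded by inserting $\oint_{|s|=1}s^{-\sum h_k}\frac{ds}{2\pi i s}$ and then using multilinearity of the determinant to pull the sum inside. This turns the numerator into $\oint_{|s|=1}\det\bigl[\sum_{h\in\Z}p_{2T}(x_j-y_k+2\width h)s^h\bigr]\frac{ds}{2\pi is}$.

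Next I need the integral representation that matches the symbol $f(z)=z^{-T}(1+z)^{2T}$. Since the walk at time $2T$ has $p_{2T}(2m)=2^{-2T}\binom{2T}{T+m}$ for $m\in\Z$, and
\begin{equation*}
\oint_{|z|=1}z^{-m}f(z)\frac{dz}{2\pi iz}=[z^{T+m}](1+z)^{2T}=\binom{2T}{T+m},
\end{equation*}
one obtains $p_{2T}(2m)=2^{-2T}\oint_{|z|=1}z^{-m}f(z)\frac{dz}{2\pi iz}$. Combined with the roots-of-unity identity
\begin{equation*}
\sum_{h\in\Z}\oint_{|z|=1}\phi(z)z^{-Mh}\frac{dz}{2\pi iz}\,s^h=\frac{1}{M}\sum_{w\in\nodes_s}\phi(w),\qquad \nodes_s=\{z:z^\width=s\},
\end{equation*}
valid for any analytic $\phi$ on an annulus containing $\Sigma$ (verified by writing $\phi(z)=\sum a_k z^k$ and noting $\frac{1}{M}\sum_{w^\width=s}w^k=s^{k/\width}$ when $\width\mid k$ and $0$ otherwise), I get, with $\phi(z)=z^{-(j-k)}f(z)$,
\begin{equation*}
\sum_{h\in\Z}p_{2T}(x_j-y_k+2\width h)s^h=\frac{2^{-2T}}{\width}\sum_{w\in\nodes_s}w^{-j+k}f(w).
\end{equation*}

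Finally, pulling the factor $2^{-2T}$ out of each column, the numerator determinant equals $2^{-2nT}\To_n(f,\nodes_s)$; an analogous (trivial) computation gives $\det[p_{2T}(x_j-y_k)]=2^{-2nT}\ToC_n(f)$. The $2^{-2nT}$ factors cancel, yielding~\eqref{z3-27-2}. The only real subtlety, and hence the one place to be careful, is the parity: because $X(0)\in 2\Z^n$ and all walkers have matched parity thereafter, all relevant displacements $x_j-y_k+2\width h_k$ are even, which is precisely why the symbol $f(z)=z^{-T}(1+z)^{2T}$ (rather than its "square root" variant) is the right one — the shift by $2\width$ corresponds to a shift of $\width$ in the exponent of $z$, matching the root-of-unity set $\nodes_s=\{z^\width=s\}$ in the statement.
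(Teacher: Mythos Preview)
Your proof is correct and follows exactly the route the paper intends: the paper omits the proof of this proposition, stating only that it is similar to that of Proposition~\ref{prop:contiSRW}, and your argument is precisely that adaptation --- Karlin--McGregor in the chamber $\{x_0<\cdots<x_{n-1}<x_0+2M\}$, encoding the constraint $\sum h_k=0$ via $\oint_{|s|=1}$, the contour-integral representation $p_{2T}(2m)=2^{-2T}\oint z^{-m}f(z)\frac{dz}{2\pi i z}$, and the roots-of-unity identity to pass to the discrete Toeplitz determinant. Your observation about parity (all displacements even, so shifts of $2M$ in position correspond to shifts of $M$ in the $z$-exponent, matching $\nodes_s=\{z^M=s\}$) is exactly the point that distinguishes this case from the continuous-time one.
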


The fluctuations are again given by $F$. Note that $2n\le \widthf_n(2T)\le 2n+2T$ for all $n$ and $T$.

\begin{thm}
\label{z3-27-1}
Fix $\gamma>0$ and $0<\beta<2$. Then for $n=[\gamma T^\beta]$,
\begin{equation}
        \lim_{T\to\infty} \Prob \left( \frac{\widthf_n(2T) -2\sqrt{n^2+2nT}}{(n^2+2nT)^{-\frac{1}{6}}T^{\frac{2}{3}}} \le\x \right)=
    F(\x).
  \end{equation}
for each $\x\in \R$.
\end{thm}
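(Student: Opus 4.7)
The plan is to follow the strategy established for the non-intersecting Brownian bridges in Section~\ref{sec:pfTh12} and the continuous-time simple random walks in Section~\ref{sec:SRW1}. The starting point is the combination of Proposition~\ref{prop:SRW} with Theorem~\ref{thm:0}, which expresses $\Prob(\widthf_n(2T) < 2M)$ as the contour average
\[
\oint_{|s|=1} \det\bigl(1+K_s\bigr)_{L^2(\Sigma_{in} \cup \Sigma_{out}, \frac{dz}{2\pi iz})} \frac{ds}{2\pi is}
\]
of Fredholm determinants. Here $K_s$ is built from the Christoffel-Darboux kernel of the orthonormal polynomials $p_k(z)$ with respect to the continuous measure $z^{-T}(1+z)^{2T} \frac{dz}{2\pi iz}$ on the unit circle, multiplied by the factor~\eqref{eq:vforCor} with $\gamma(z) = z^M - s$.

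Set $2M = 2\sqrt{n^2+2nT} + (n^2+2nT)^{-1/6} T^{2/3} x$ with $n = [\gamma T^\beta]$. The next step is to carry out the Deift-Zhou steepest descent analysis of the $2\times 2$ Riemann-Hilbert problem solved by $(p_n, p_n^*)$ to obtain the asymptotics of these polynomials and their derivatives, uniformly on compact subsets away from the endpoints of the support of the equilibrium measure. The equilibrium measure associated with the weight $z^{-T}(1+z)^{2T}$ together with the $z^{-n}$ factor coming from~\eqref{eq:Kconti} has support on the full unit circle when $n/T$ is small and on an arc missing a neighborhood of $z = -1$ when $n/T$ is large; therefore, paralleling Section~\ref{sec:SRW1}, the analysis has to be carried out in several overlapping regimes to cover $0 < \beta < 2$, with the transitional behavior near $n \sim T$ handled by a Painlev\'e-type local parametrix at $z = -1$. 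In every regime the relevant saddle point for $K_s$ on the positive real axis is at $z = 1$, and the local parametrix there is of Airy type.

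Once the asymptotics of $p_n(z), p_n^*(z)$ are in hand, one rescales
\[
z = 1 + \lambda(n,T) \zeta, \qquad w = 1 + \lambda(n,T) \eta, \qquad \lambda(n,T) := \frac{c\,(n^2+2nT)^{1/6}}{T^{2/3}}
\]
for an appropriate constant $c$, and deforms $\Sigma_{in}$ and $\Sigma_{out}$ to contours passing through $z = 1$ in the style of Figure~\ref{fig:BM}, with an inner part in a shrinking neighborhood of $z = 1$. A Taylor expansion of the phase function of $K_s$ near $z=1$ should produce the cubic exponent $m_x(\xi) - m_x(\eta)$ of~\eqref{z3-26-1} exactly as in~\eqref{eq:phize}. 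By the same argument that yields~\eqref{eq:oute}, the contribution to the Fredholm determinant from outside the shrinking neighborhood of $z = 1$ is exponentially small uniformly in $s$, and the rescaled kernel on the local contours converges in trace norm to the Airy-type kernel of~\eqref{eq:leadK}. The limiting determinant, which equals $F(x)$ by~\eqref{eq:Khatli}, is independent of $s$, so that the $s$-integration in~\eqref{z3-27-2} produces $F(x)$ as well.

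The main obstacle is the uniform control of the asymptotics of $p_n, p_n^*$ across the entire regime $n = [\gamma T^\beta]$, $0 < \beta < 2$, together with sharp error estimates on both the polynomials and their derivatives so that the pointwise kernel convergence can be upgraded to trace-norm convergence of the Fredholm determinant, uniformly in $s$ on $|s|=1$. This is essentially the technical difficulty noted in Remark~\ref{rmk:sec4}: it should be achievable by careful bookkeeping in the Riemann-Hilbert analyses of \cite{Baik99,Baik08,BKMMbook}, but requires a nontrivial amount of work at the transition $n \sim T$ where the equilibrium measure changes character, which is the single step in the program where something beyond the templates of Section~\ref{sec:pfTh12} and Section~\ref{sec:SRW1} is really needed.
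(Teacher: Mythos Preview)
Your outline matches the paper's approach exactly: the paper itself omits the proof of Theorem~\ref{z3-27-1}, saying only that it is ``similar to those for the continuous-time symmetric simple random walks,'' i.e., Proposition~\ref{prop:SRW} combined with Theorem~\ref{thm:0}, followed by a Deift--Zhou analysis of the orthogonal polynomials for the weight $z^{-T}(1+z)^{2T}$, with the dominant contribution coming from the local Airy behavior near $z=1$.

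One small correction: you have the direction of the equilibrium-measure transition reversed. On $|z|=1$ the weight equals $(2\cos(\theta/2))^{2T}$, so the effective potential $-\tfrac{1}{n}\log f$ blows up at $z=-1$ with strength proportional to $T/n$. Hence the support is the \emph{full} circle when $n/T$ is large and an \emph{arc} (gap opening at $z=-1$) when $n/T$ is small, exactly as in the continuous-time case discussed in Section~\ref{sec:SRW1}. This does not affect the structure of your argument, since the saddle at $z=1$ governs the limit in either regime, but it matters for correctly setting up the parametrices and for locating where in the range $0<\beta<2$ the transitional (Painlev\'e) analysis at $z=-1$ is actually needed.
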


Note that the parameter $(n^2+2nT)^{-\frac{1}{6}}T^{\frac{2}{3}} \to \infty$ as $T\to \infty$ when $\beta<2$.
This parameter is $O(1)$ when $\beta=2$. Indeed one can show that when $\beta>2$,
\begin{equation}
    \lim_{T\to\infty} \Prob(\widthf_n(2T)=2n+2T)= 1.
\end{equation}

The proofs of the proposition and the theorem are similar to those for the continuous-time symmetric simple random walks and we omit them. 


\section{Proof of Theorem~\ref{z11-3-4}}\label{sec:LPP}

In this section we give a proof of Theorem~\ref{z11-3-4}.
The proof is based on the results on a solvable directed  last passage percolation model and is similar to  the proof of the identity~\eqref{eq:GOE} by Johansson~\cite{Johansson04}.


By symmetry we may assume $\alpha\le\beta$.
Let $w(i,j),(i,j)\in\N^2,$ be independent random variables with geometric distribution,
$\Prob(w(i,j)=k)=(1-q)q^k$, $k=0,1,2,\cdots$.
Define the random variable (point-to-point directed last passage time)
\begin{equation}\label{eq:Gde}
  G(M,N)=\max_{\pi} \bigg( \sum_{(i,j)\in\pi}w(i,j) \bigg),
\end{equation}
where the maximum is taken over all possible up/right paths from $(1,1)$ to $(M,N)$.
The limiting fluctuations of $G(M,N)$ are known to be $F$ in \cite{johansson00} as $M$ and $N$ tend to infinite with a finite ratio. In particular, when $M=N=(\alpha+\beta)n$, 
\begin{equation}
\label{z11-11-5}
  \lim_{n\rightarrow\infty}\Prob\left(\frac{G((\alpha+\beta)n,(\alpha+\beta)n)-\mu (\alpha+\beta)n}{\sigma (\alpha+\beta)^{1/3}n^{1/3}}\le s\right)=F(s),
\end{equation}
where
\begin{equation}\label{eq:sig}
  \mu=\frac{2\sqrt{q}}{1-\sqrt{q}},\qquad \sigma=\frac{q^{1/6}(1+\sqrt{q})^{1/3}}{1-\sqrt{q}}.
\end{equation}

Consider the lattice points on the line connecting the points $(1,2\alpha n)$ and $(2\alpha n,1)$, i.e.
$\calL:=\{(\alpha n+u, \alpha n-u): |u|<\alpha n\}$.
An up/right path from $(1,1)$ to $((\alpha+\beta)n,(\alpha+\beta)n)$ passes through a point on $\calL$.
Considering the up/right path from $(1,1)$ to a point on $\calL$ and the down/left path from $((\alpha+\beta)n,(\alpha+\beta)n)$ to the same point on $\calL$ (see Figure~\ref{fig:LPP}),
we find that $G((\alpha+\beta)n, (\alpha+\beta)n)$ equals
\begin{equation}
\label{z11-11-6}
  \max_{|u|<\alpha n}\left(G^{(1)}(\alpha n+u,\alpha n-u)+G^{(2)}(\beta n+u,\beta n-u)\right)+O(1),
\end{equation}
where $G^{(1)}$ and $G^{(2)}$ are two independent copies of $G$, and the error term $O(1)$ comes from the duplicate diagonal term $w(\alpha n+u,\alpha n-u)$.

\begin{figure}[htbp]
  \centering
  \includegraphics[scale=0.35]{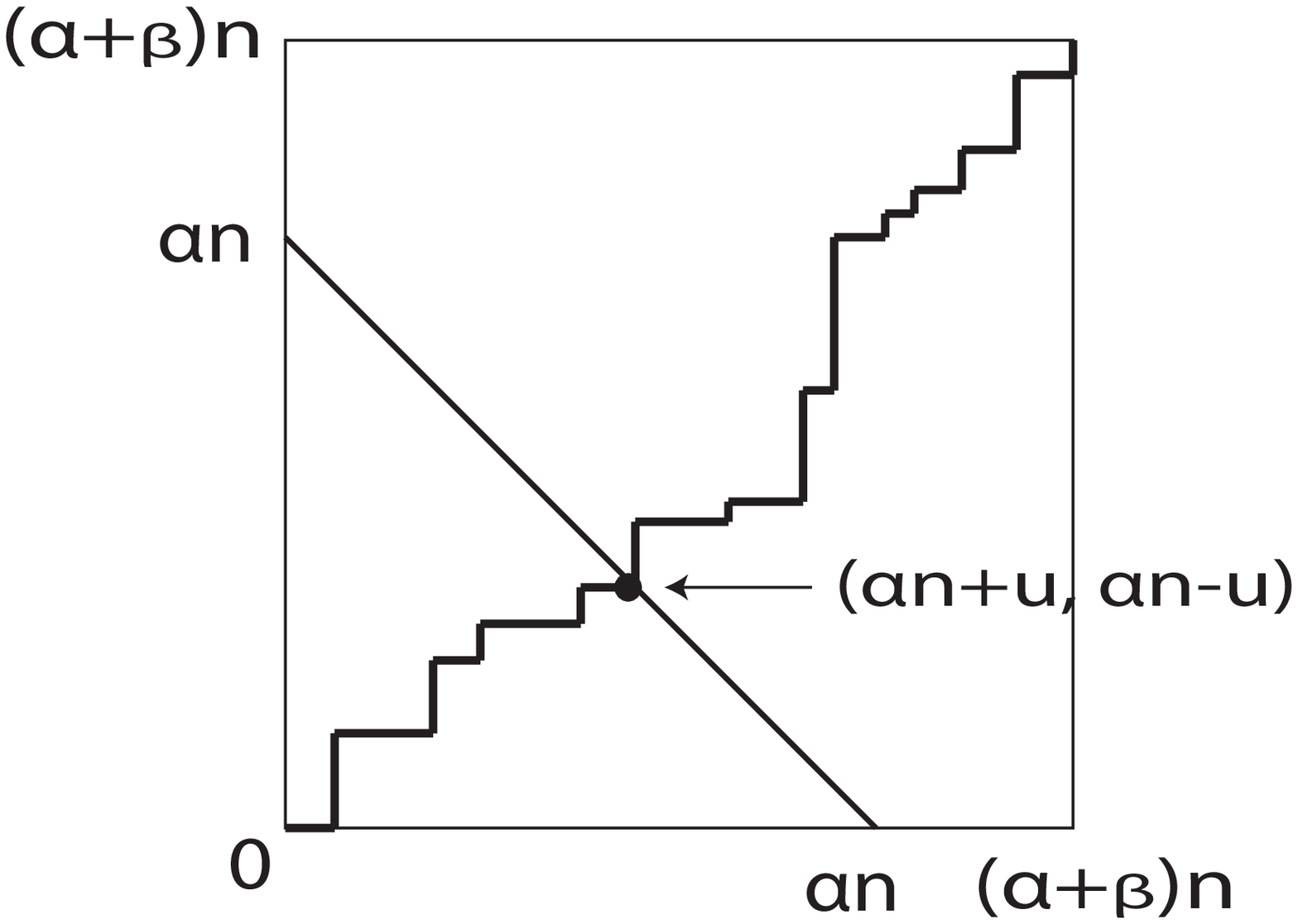}
  \caption{Intersection of an up/right path with $\mathcal{L}$}
  \label{fig:LPP}
\end{figure}

Consider $G^{(i)}(\alpha n+u,\alpha n-u)$ as a process in time $u$.
For $u$ of order $n^{2/3}$, it was shown in \cite{Johansson04} that the fluctuations of this process converge the Airy process in the functional convergence. More precisely, if we set
\begin{equation}
  	H^{(1)}_n(\tau):=\frac{G^{(1)}(\alpha n+d^{-1}(\alpha n)^{2/3}\tau,\alpha n-d^{-1}(\alpha n)^{2/3}\tau)-\mu \alpha n}{\sigma (\alpha n)^{1/3}},
\end{equation}
and
\begin{equation}
  	H^{(2)}_n(\tau):=\frac{G^{(2)}(\beta n+d^{-1}(\beta n)^{2/3}\tau,\beta n-d^{-1}(\beta n)^{2/3}\tau)-\mu \beta n}{\sigma (\beta n)^{1/3}},
\end{equation}
for $|\tau|<d(\alpha n)^{1/3}$, where $d:=q^{1/6}(1+\sqrt{q})^{-2/3}$, then $H^{(i)}_n(\tau)$ converges to the Airy process $\mathcal{A}^{(i)}(\tau)-\tau^2$, $i=1,2$.
(We note that there is a typographical error in the formula (1.8) in \cite{Johansson04} where, in terms of our notations, $\sigma$ is changed to $\frac{q^{1/6}(1+\sqrt{q})^{1/3}}{1-q}$. However, the correct formula of $\sigma$ is $\frac{q^{1/6}(1+\sqrt{q})^{1/3}}{1-\sqrt{q}}$ as in~\eqref{eq:sig} which is also  same as in \cite{johansson00}.)
Since~\eqref{z11-11-6} implies that
\begin{equation}
\label{z11-21-1}
\begin{split}
  &\Prob\left(\frac{G(N,N)-\mu N}{\sigma N^{1/3}}\le s\right)\\
  &= \Prob\left(\max_{|\tau|< d\alpha n^{1/3}}\left(\alpha^{1/3}H_n^{(1)}(\alpha^{-2/3}\tau)+\beta^{1/3}H_n^{(2)}(\beta^{-2/3}\tau)\right)\le (\alpha+\beta)^{1/3}s\right)+O(N^{-1/3}),
\end{split}
\end{equation}
we obtain Theorem~\ref{z11-3-4} if we prove that
\begin{equation}
  \label{z11-21-2}
  \begin{split}
  	&\lim_{n\to\infty}  \Prob\left(\max_{|\tau|< d\alpha n^{1/3}}\left(\alpha^{1/3}H_n^{(1)}(\alpha^{-2/3}\tau)+\beta^{1/3}H_n^{(2)}(\beta^{-2/3}\tau)\right)\le (\alpha+\beta)^{1/3}s\right)\\
	&=\Prob \left( \max_{\tau\in \R} \left(\alpha^{1/3}\mathcal{A}^{(1)}(\alpha^{-2/3}\tau)
    +\beta^{1/3}\mathcal{A}^{(2)}(\beta^{-2/3}\tau)-(\alpha^{-1}+\beta^{-1})\tau^2 \right) \le (\alpha+\beta)^{1/3}s \right).
\end{split}
\end{equation}
In \cite{Johansson04}, a similar identity
\begin{equation}
\label{z11-21-3}
  \lim_{n\to\infty}\Prob\left(\max_{|\tau|< dn^{1/3}} H_n(\tau)\le s\right)=\Prob\left(\max_{\tau\in\R}(\mathcal A(\tau)-\tau^2)\le s\right)
\end{equation}
was proved as a part of the proof of~\eqref{eq:GOE}.
We proceed similarly and use the estimates obtained in \cite{Johansson04} .

Set
\begin{equation}
\begin{split}
	X_{n,T}:= (\alpha+\beta)^{-1/3}\cdot \max_{|\tau|\le T}\left(\alpha^{1/3}H_n^{(1)}(\alpha^{-2/3}\tau)+\beta^{1/3}H_n^{(2)}(\beta^{-2/3}\tau)\right)
\end{split}
\end{equation}
and
\begin{equation}
\begin{split}
	Y_{n,T}:= (\alpha+\beta)^{-1/3}\cdot \max_{|\tau|>T}\left(\alpha^{1/3}H_n^{(1)}(\alpha^{-2/3}\tau)+\beta^{1/3}H_n^{(2)}(\beta^{-2/3}\tau)\right).
\end{split}
\end{equation}
Since
\begin{equation}
\begin{split}
	&\Prob(X_{n,T}\le s)\\
	&\ge \Prob\left(\max_{|\tau|< d\alpha n^{1/3}}\left(\alpha^{1/3}H^{(1)}(\alpha^{-2/3}\tau)+\beta^{1/3}H_n^{(2)}(\beta^{-2/3}\tau)\right)\le (\alpha+\beta)^{1/3}s\right)\\
	&\ge \Prob( X_{n,T} \le s) - \Prob( Y_{n,T} > s)
\end{split}
\end{equation}
for all large enough $n$ for each fixed $T$,~\eqref{z11-21-2} follows from the following three properties:

\begin{enumerate}[(a)]
\item For each $\epsilon>0$, there are positive constants $T_0$ and $n_0$ such that
$\Prob\left(Y_{n,T}> s\right)<\epsilon$
for all $T>T_0$ and $n>n_0$,

\item For each fixed $T$, $\Prob\left( X_{n,T}\le  s\right)\to \Prob \left( A_T \le s \right)$ as $n\to \infty$.

\item Finally, $\Prob(A_T \le s) \to \Prob(A_\infty \le s)$ as $T\to \infty$.
\end{enumerate}

Here
\begin{equation}
\begin{split}
	A_T:= (\alpha+\beta)^{-1/3}\cdot \max_{|\tau|\le T}\left(\alpha^{1/3}\mathcal A^{(1)}(\alpha^{-2/3}\tau)+\beta^{1/3}\mathcal A^{(2)}(\beta^{-2/3}\tau)-(\alpha^{-1}+\beta^{-1})\tau^2\right)
\end{split}
\end{equation}
and  $A_{\infty}$ is the same random variable with the maximum taken over $\tau\in \R$.


A functional limit theorem to the Airy process was proved in \cite{Johansson04} (Theorem 1.2). This means that $H_n^{(i)}(\tau) \to\mathcal A^{(i)}(\tau)-\tau^2$ at $n\to \infty$ in the sense of weak convergence of the probability measures on $C[-T,T]$ for each fixed $T$.
Hence the property (b) follows
a theorem on the convergence of product measures (\cite{BillingsleyBook}, Theorem 3.2).

The property (c) follows from the monotone convergence theorem since $\{A_\infty \le s\} = \cap_{T>0} \{A_T\le s\}$.


For the property (a), we use the estimates (5.19) and (5.20) in \cite{Johansson04}: there are positive constants $C$ and $c$ such that
\begin{equation}
\begin{split}
  	&\Prob\left(\max_{T<\tau \le \log n} H_n^{(i)}(\alpha^{-2/3}\tau)>M \right)\\
  &\le\int_{\alpha^{-2/3} T-1}^{\infty}e^{-c(M-1+x^2)^{3/2}}dx+C\int_{\alpha^{-2/3} T-1}^{\infty}e^{-x^3}dx
\end{split}
\end{equation}
and
\begin{equation}
  	\Prob\left(\max_{\tau \ge \log n}H_n^{(i)}(\alpha^{-2/3}\tau)>M\right)\le Cne^{-c(\log n)^3}
\end{equation}
for all $M$.
Therefore, taking $M=\alpha^{-1/3}(\alpha+\beta)^{1/3}s/2$, for any $\epsilon>0$, we have
\begin{equation}
	\Prob\left((\alpha+\beta)^{-1/3} \max_{\tau \ge T}\alpha^{1/3}H_n^{(i)}(\alpha^{-2/3}\tau)>\frac{s}{2}\right)<\frac{\epsilon}{2},
\end{equation}
if $T,n$ are both large enough.
This proves (a).


\end{document}